\long\def\/*#1*/{}
\numberwithin{equation}{section}
\numberwithin{equation}{section}
\newcommand{\Pro}[1]{\ensuremath{\mathbb{P}\left(#1\right)}}
\newcommand{\expt}[1]{\ensuremath{\mathbb{E}\left[#1\right]}}
\newcommand{\E}{\ensuremath{\mathbb{E}}}
\newcommand{\N}{\ensuremath{\mathbb{N}}}
\newcommand{\Dmb}{{\mathbb{D}}}
\newcommand{\Emb}{{\mathbb{E}}}
\newcommand{\Nmb}{{\mathbb{N}}}
\newcommand{\Rmb}{{\mathbb{R}}}
\newcommand{\Smb}{{\mathbb{S}}}
\newcommand{\dif}{\mathrm{d}}
\newcommand{\xbd}{{\boldsymbol{x}}}
\newcommand{\Nbar}{{\bar{\mathcal{N}}}}
\newcommand{\one}{{\boldsymbol{1}}}
\newcommand{\Lmc}{{\mathcal{L}}}
\newcommand{\bbar}{{\bar{b}}}
\newcommand{\Smc}{{\mathcal{S}}}
\newcommand{\Ebf}{{\mathbf{E}}}
\newcommand{\Smcbar}{{\bar{\Smc}}}
\newcommand{\Pbf}{{\mathbf{P}}}
\newcommand{\inn}{{\mathrm{in}}}
\newcommand{\out}{{\mathrm{out}}}
\newcommand{\qq}{\boldsymbol{q}}
\newtheorem{theorem}{Theorem}
\newtheorem*{claim*}{Claim}
\newtheorem{lemma}[theorem]{Lemma}
\newtheorem{remark}{Remark}
\newtheorem{condition}{Condition}
\let\plainqed\qedsymbol
\numberwithin{equation}{section}
\numberwithin{theorem}{section}
\newcommand{\QQ}{\mathbf{Q}}
\newcommand{\FF}{\mathbf{F}}
\newcommand{\cN}{\mathcal{N}}
\begin{document}

\title{Supermarket Model on Graphs}
\author[$1$]{Amarjit Budhiraja\footnote{budhiraj@email.unc.edu \hspace{2.5cm}$^\dagger$debankur\_mukherjee@brown.edu\hspace{2cm}$^\ddagger$ruoyu@umich.edu}}
\author[$2$]{Debankur Mukherjee$\dagger$}
\author[$3$]{Ruoyu Wu$\ddagger$}
\affil[$1$]{
University of North Carolina, Chapel Hill, USA}
\affil[$2$]{
Brown University, USA}
\affil[$3$]{
University of Michigan, USA}

\renewcommand\Authands{, }

\date{\today}

\maketitle 

\begin{abstract} 
We consider a variation of the supermarket model in which the servers can communicate with their neighbors and where the neighborhood relationships are described in terms of a suitable graph. Tasks with unit-exponential service time distributions arrive at each vertex as independent Poisson processes with rate $\lambda$, and each task is irrevocably assigned to the shortest queue among the one it first appears and its $d-1$ randomly selected neighbors. This model has been extensively studied when the underlying graph is a clique in which case it reduces to the well known {\em power-of-$d$} scheme. In particular, results of Mitzenmacher (1996) and Vvedenskaya {\em et al.}~(1996) show that as the size of the clique gets large, the occupancy process associated with the queue-lengths at the various servers converges to a deterministic limit described by an infinite system of ordinary differential equations (ODE). In this work, we consider settings where the underlying graph need not be a clique and is allowed to be suitably sparse. We show that if the minimum degree approaches infinity (however slowly) as the number of servers $N$ approaches infinity, and the ratio between the maximum degree and the minimum degree in each connected component approaches $1$ uniformly, the occupancy process converges to the same system of ODE as the classical supermarket model. In particular, the asymptotic behavior of the occupancy process is insensitive to the precise network topology. We also study the case where the graph sequence is random, with the $N$-th graph given as an Erd\H{o}s-R\'enyi random graph on $N$ vertices with average degree $c(N)$. Annealed convergence of the occupancy process to the same deterministic limit is established under the condition $c(N)\to\infty$, and under a stronger condition $c(N)/ \ln N\to\infty$, convergence (in probability) is shown for almost every realization of the random graph.
\end{abstract}

\section{Introduction}
\paragraph{Background and motivation.} 
In this paper we analyze a variation of the supermarket model in which the servers can communicate with their neighbors and where the neighborhood relationships are described in terms of a suitable graph.
Specifically, consider a graph $G_N$ on $N$ vertices, where the vertices represent single-server queues. 
Tasks with unit-exponential service time distributions arrive at each server as independent Poisson processes of rate $\lambda$, and each task is irrevocably assigned to the shortest queue among the one it first appears and its $d-1$ randomly selected neighbors.

The above model has been extensively investigated in the case where $G_N$ is a clique.
In that case, each task is assigned to the shortest queue among $d\geq 2$ queues selected randomly from the entire system, which is commonly referred to as the `power-of-$d$' or JSQ($d$) scheme.
Since the servers are exchangeable when the underlying graph is a clique,
the system is  quite tractable via classical mean-field techniques.
Results in Mitzenmacher~\cite{Mitzenmacher1996,Mitzenmacher01} and Vvedenskaya
{\em et al.}~\cite{VDK96} show that for any fixed value of $d$, as the size of the clique gets large, the occupancy process associated with the queue-lengths at the various servers converges to a deterministic limit described by an infinite system of ordinary differential equations (ODE).
Moreover, even sampling as few as $d = 2$
servers yields significant performance enhancements over purely
random assignment ($d = 1$) as $N \to \infty$.
Specifically,  when $\lambda<1$,
the probability that there are $i$ or more tasks at a given queue in steady state
is proportional to $\lambda^{\frac{d^i - 1}{d - 1}}$ as $N \to \infty$,
and thus exhibits super-exponential decay in $\lambda$ as opposed to exponential decay
for the random assignment policy.

However, in many service systems the `$d$ choices' might be geographically constrained~\cite{FG16, G15}, 
and when a task arrives at any specific server, it becomes difficult, if not impossible, to fetch instantaneous state information from an arbitrarily selected $d-1$ servers.
This might give rise to a constrained network architecture that can be captured in terms of a graph.
Moreover, executing a task commonly involves the use of some data,
and storing such data for all possible tasks on all servers will
typically require an excessive amount of storage capacity~\cite{XYL16, WZYTZ16}.
The above issues  motivate consideration of sparser graph
topologies where tasks that arrive at a specific server~$i$ can only  be forwarded to a subset of the servers ${\mathcal N}_i$ that possess the data required to process the tasks.
For the tasks that arrive at server~$i$, the $d-1$ random choices must come from ${\mathcal N}_i$.
The subset ${\mathcal N}_i$ containing the peers of server~$i$ can be thought of as neighbors in some graph $G_N$.
Although the above scenario corresponds to a setting with directed graphs and in our paper we consider the case of undirected graphs,
our results extend in a straightforward manner to the setting of  directed graphs, see Remarks~\ref{rem:directed} and~\ref{rem:directed-random} for detailed discussions.
While considering load balancing schemes with sparse topologies is desirable from applications perspectives, the corresponding mathematical formulation, that results in systems that in general will not be exchangeable or have simple Markovian state descriptors, puts us outside the range of
 classical mean-field techniques, leading to a fairly uncharted territory from methodological standpoint, as further discussed below.

\paragraph{Related work.}
The study of the JSQ($d$) scheme in the context of large-scale  queueing networks was initiated by Mitzenmacher \cite{Mitzenmacher1996,Mitzenmacher01}
and Vvedenskaya {\em et al.}~\cite{VDK96}.
Since then, this scheme along with its many variations have been studied extensively in \cite{BLP13,BLP12,AR17, EG16, MBLW16-3, Ying17, LM06, LN05, G05, MPS02, BudFri2} and many more.
In the context of load balancing problems on graphs, 
\cite{G15,T98} examines the performance
on certain fixed-degree graphs and in particular ring topologies.
Their results demonstrate that the flexibility to forward tasks
to a few neighbors, or even just one, with possibly shorter queues
significantly improves the performance in terms of the waiting time and tail distribution of the queue length. 
This is similar to the power-of-two effect in the setting of cliques, but the results in \cite{G15,T98} also establish that
the performance is sensitive to  the underlying graph topology, and that selecting from a fixed set of $d - 1$ neighbors typically
does not match the performance of re-sampling $d - 1$ alternate
servers for each incoming task from the entire population.
Recently, Mukherjee {\em et al.}~\cite{MBL17} study the join-the-shortest queue (JSQ) policy on graphs, where each task joins the shortest queue among the one it first appears and \emph{all} its neighbors, and establishes that asymptotically, the performance of the JSQ policy on a clique can be achieved by much sparser topologies, provided the graph is suitably random in Erd\H{o}s-R\'enyi sense. 
We will contrast the results of the current paper with those obtained in~\cite{MBL17}  in greater detail in Section~\ref{sec:main}, see Remark~\ref{rem:compare-20}.
Relevant from a high level, queueing system topologies with limited flexibility 
were examined using quite different
techniques by Tsitsiklis and Xu \cite{TX15,TX11} in a dynamic
scheduling framework (as opposed to the load balancing context).
We refer to~\cite{BBLM18} for a recent survey on scalable load balancing algorithms.

If tasks do not get served and never depart but simply accumulate,
then our model as described above amounts to a so-called
balls-and-bins problem on a graph.
Viewed from that angle, a close counterpart of our problem is
studied in Kenthapadi and Panigrahy~\cite{KP06}, where, in our
terminology, each arriving task is routed to the shortest
of $d \geq 2$ randomly selected neighboring queues. 
In this setup they show that if each vertex in the underlying graph has degree $\Theta(N^\varepsilon)$, where $\varepsilon$ is not
too small, the maximum number of balls in a bin scales as
$\log(\log(N))/\log(d) + O(1)$.
This scaling is the same as in the case when the underlying graph
is a clique~\cite{ABKU94}.
In a more recent paper by Peres, Talwar, and Weider~\cite{PTW15} the balls-and-bins problem has been analyzed in the context of a $(1+\beta)$-choice process, where each ball goes to a random bin with probability $1-\beta$ and to the lesser loaded of the two bins corresponding to the nodes of a random edge of the graph with probability $\beta$.
In particular, for this process they show that the difference between the maximum number of balls in a bin and the typical number of balls in the bins  is $O(\log(N)/\sigma)$, where $\sigma$ is the edge expansion property of the underlying graph.
We refer to~\cite{W17} for a recent survey on the balls-and-bins literature.
\paragraph{Main contributions.}
In most of the load balancing literature on systems of single-server queues mentioned above, the primary tool has 
been  a convenient occupancy measure representation for the collection of queue-length processes 
associated with the various servers.
Specifically, under the assumption of exponential service time distributions, the number of queues with queue length at least $i$ at time $t$ denoted by $Q_i(t)$, for $i=1,2,\ldots$ forms a Markov process.
This occupancy process $\QQ(\cdot) = (Q_1(\cdot), Q_2(\cdot), \ldots)$ is then analyzed  using classical mean-field techniques as the number of servers becomes large.
The fundamental challenge in the analysis of load balancing on arbitrary graph topologies is that one cannot reduce the study to that for the state occupancy process since it is no longer a Markov process. In general,
one needs to keep track
of the evolution of the number of tasks at each vertex along with the information on neighborhood relationships.
This is a significant obstacle in using tools from classical mean-field analysis for such systems.
Consequently, results for load balancing queuing systems
on general graphs have to date remained scarce.
To the best of our knowledge, this is the first work to study rigorously the limits of the JSQ($d$) occupancy process for non-trivial graph topologies (i.e., other than a clique). 

In~\cite{MBL17}, where the tasks are assigned to the shortest queue among \emph{all} the neighbors, the authors used a stochastic coupling to compare the occupancy process for an arbitrary graph topology with that for the clique, and establish that under suitable assumptions on the {\em well-connectedness} of the graph topology, the  occupancy processes and their diffusion scaled versions
have to the same weak limits as for the clique. 
Loosely speaking, for the first convergence, the well-connectedness requires that for any $\varepsilon>0$, the neighborhood of any collection of $\varepsilon N$ vertices contains $N-o(N)$ vertices.
This ensures that on any finite time interval, the fraction of tasks not assigned to servers with the `fluid-scaled minimum queue length' is arbitrarily small.
Thus for large $N$ the occupancy process becomes nearly indistinguishable from that in a clique.
The coupling in~\cite{MBL17} is particularly tailored for schemes where on any finite time interval, most of the arrivals are assigned to one of the fluid-scaled shortest queues.
For the setting considered in the current work where a fixed number of servers are probed at each arrival,
developing analogous coupling methods  appears to be challenging.
To see this, observe that when all neighbors are probed at arrivals, it is clear that the queue lengths will be better balanced (in the sense of stochastic majorization) for a clique than any other graph topology.
In contrast, for the JSQ($d$) scheme with fixed $d$, even this basic property, namely that the performance of the system will be `optimal' if the topology is a clique, is not clear.
In this paper, we  take a very different approach, and analyze the evolution of the queue-length process at an arbitrary tagged server as the system size becomes large.
The main ingredient is a careful analysis of local occupancy measures associated with neighborhood of each server and to argue that under suitable conditions their asymptotic behavior is the same for all servers.

Our first result establishes that under fairly mild conditions on the graph topology $G_N$ (diverging minimum degree and a degree regularity condition, see Condition~\ref{cond-reg} and also Remark \ref{rmk:weaker_condition}), for suitable initial occupancy measure, for any fixed $d\geq 2$, the global occupancy state process for the JSQ($d$) scheme on $G_N$ has the same weak limit as that on a clique, as the number of vertices $N$ becomes large (see Theorem \ref{th:deterministic}).
Also, we show that the propagation of chaos property holds for this system, in the sense that the  queue lengths at any finite collection of tagged servers are statistically asymptotically independent, and the 
 queue-length process for each server converges in distribution (in the path space) to the corresponding McKean-Vlasov process (see Theorem \ref{th:tagged}).
We note that the class of graphs for which the above results hold includes arbitrary $d(N)$-regular graphs, where $d(N)\to\infty$ as $N\to\infty$.
As an immediate consequence of these results, we obtain that the same asymptotic performance of
a JSQ($d$) scheme on  cliques can be achieved by a much sparser graph in which
the number of connections is reduced by almost a factor $N$.  Such a result provides a significant improvement on network connectivity requirements and gives important insights for sparse network design.

When the graph sequence $\{G_N\}_{N\geq 1}$ is random with $G_N$ given as an Erd\H{o}s-R\'enyi random graph (ERRG) with average degree $c(N)$, we establish that for any $c(N)$ that diverges to infinity with $N$, the annealed law of the occupancy process converges weakly to the same limit as in the case of a clique.
For convergence of the quenched law, we require a somewhat more stringent  growth  condition on the average degree.
Specifically, we show that if $c(N)/\log(N)\to\infty$ as $N\to\infty$, then for almost every realization of the random graph the  quenched law of the state occupancy process  converges  to the same  limit as for the case of a clique.
Thus the above results show that the asymptotic performance for cliques  can be achieved by much sparser topologies, even when the connections are random.

In the classical setting of weakly interacting particle systems one considers a collection of
 $N$ stochastic processes on a clique, given as the solution of $N$ coupled stochastic differential equations, where the evolution of any particle at a given time instant depends on its own state and the empirical measure of all particles at that moment (see \cite{Sznitman1991, Kolokoltsov2010,KurtzXiong1999} and references therein).
The asymptotic behavior of the associated state occupancy  measures have been  well studied, including the law of large numbers, propagation of chaos properties, central limit theorems, and large and moderate deviation principles.
However, there is much less work for systems on general graphs except for some recent results for  weakly interacting diffusions on Erd\H{o}s-R\'enyi random graphs. Annealed law of large numbers and central limit theorems for such systems have been established in \cite{BhamidiBudhirajaWu2016} and quenched law of large numbers has been shown in \cite{Delattre2016}.
However these works do not study queuing systems of the form considered here.

\paragraph{Organization of the paper.}
The rest of the paper is organized as follows.
In Section~\ref{sec:main} we present the main results of this paper along with some remarks and discussion -- Subsections~\ref{ssec:det} and~\ref{ssec:random} contains the results for sequence of deterministic and random graphs, respectively.
The proofs of  the results in Section \ref{sec:main} are presented in Section~\ref{sec:proofs}. 
Finally, we conclude with a discussion of future research directions in Section~\ref{sec:conclusion}.

\paragraph{Notation.}
Let $[N] \doteq \{1,\dotsc,N\}$ for $N \in \Nmb$.
For any graph $G_N = (V_N,E_N)$, where $V_N$ is a finite set of vertices and $E_N \subset V_N\times V_N$ is the set of edges,  and $i,j \in V_N$, let
  $\xi_{ij}^N = 1$ if $(i,j)\in E_N$ and $0$ otherwise.
In this work, throughout $V_N =[N]$, $G_N$ is undirected, namely $\xi_{ij}^N=\xi_{ji}^N$, and $E_N$ will be allowed to be random, in which case $\xi_{ij}^N$ will be random variables. 
Let $\Nmb_0 \doteq \Nmb \cup \{0\}$. For a set $A$, denote by $|A|$ the cardinality. 
For a Polish space $\Smb$, denote by $\Dmb([0,\infty),\Smb)$ the space of right continuous functions with left limits from $[0,\infty)$ to $\Smb$, endowed with the Skorokhod topology. 
For functions $f \colon [0,\infty) \to \Rmb$, let $\|f\|_{*,t} \doteq \sup_{0 \le s \le t} |f(s)|$.
We will use $\kappa,\kappa_1,\kappa_2,\dotsc$ for various non-negative finite constants.
The distribution of $\Smb$-valued random variable $X$ will be denoted as $\Lmc(X)$.
For $x \in \Smb$, denote by $\delta_x$ the Dirac measure at the point $x$.
When the underlying graph is non-random , expectations will be denoted by `$\E$', and when the graphs are random, the notation `$\Ebf$' will be used to denote the expectation (which integrates also over the randomness of the graph topology).

\section{Model description and main results}\label{sec:main}
Let $\{G_N = (V_N, E_N)\}_{N\geq 1}$ be a sequence of simple graphs where recall that $V_N = [N]$.
The graph $G_N$ corresponds to a system with $N$ servers, where each vertex in the graph  represents a server and edges in the graph define the neighborhood relationships.  
Tasks arrive at the various servers as independent Poisson processes of rate $\lambda$.
 Each server has its own queue with an infinite buffer. Fix $d \in \Nmb$, $d\ge 2$.
 When a task appears at a server $i$, it is immediately assigned to the server with the shortest queue among server $i$ and $d-1$ servers selected uniformly at random from its neighborhood in $G_N$.
If there are multiple such servers, one of them is chosen uniformly at random.
Arrivals to any server having less than $d-1$ neighbors in $G_N$ can be assigned in an arbitrary fashion among that server and its neighbors, e.g.
to itself (i.e., without probing the queue length at any other server).
The tasks have independent unit-mean exponentially distributed service times.
The service order at each of the queues is taken to be oblivious to the actual service time requirements.

Let $X_i^N(t)$ be the number of tasks at the $i$-th server at time instant $t$, starting from some a.s.~finite $X_i^N(0)$, and $q^N_j(t)$ be the fraction of servers with queue length at least $j$ in the $N$-th system at time $t$, $i\in [N]$, $j=1,2,\ldots$, namely
\begin{equation}
	q^N_j(t) \doteq \frac{1}{N} \sum_{i=1}^N \sum_{k=j}^{\infty} \one_{\{X^N_i(t)=k\}}, \; t \ge 0, \; j \in \Nmb_0.\label{eq:tailprob}
\end{equation}
Let $\qq^N(t) \doteq (q^N_i(t))_{i \in \mathbb{N}_0}$.
Then $\qq^N \doteq \{\qq^N(t)\}_{0\le t < \infty}$ is a process with sample paths in $\Dmb([0,\infty), S)$ where
$S = \{\qq\in [0,1]^\N: q_0=1, q_i\ge q_{i+1}\ \forall i\in \Nmb_0,\mbox{ and }\sum_{i}q_i<\infty\}$ is equipped with the $\ell_1$ topology.

We will now introduce a convenient representation for the evolution of the queue length processes in the $N$-th system. 
We begin by introducing some  notation. 
For $\xbd = (x_1,\dotsc,x_d) \in \Nmb_0^d$, let $b(\xbd)$ represent the probability that given $d$ servers chosen with queue lengths $\xbd$, the job is sent to the first server in the selection.
Recalling that the job is sent to the shortest queue with ties resolved by selecting at random, the precise definition is as follows:
\begin{equation}
	\label{eq:b}
	b(\xbd) \doteq \sum_{k=1}^d \frac{1}{k} \one_{\displaystyle \{x_1 = \min_{i \in [d]} \{x_i\}, |\mbox{argmin} \{x_i\}|=k \}}.
\end{equation}
Note that 
(i) $b(\xbd)$ is symmetric in $(x_2,\dotsc,x_d)$,
(ii) $b(\xbd) \in [0,1]$, and 
(iii) $b(\xbd)$ is $1$-Lipschitz in $\xbd \in \Nmb_0^d$.
Denote by $D^N_i$ the number of neighbors of a vertex $i$ in $G_N$.
Let $\cN_i$ be iid Poisson processes of rate 1, corresponding to service completions,
and $\Nbar_i$ be iid Poisson random measures on $[0,\infty) \times \Rmb_+$ with intensity $\lambda \, ds \, dy$.
Assume that $\{\cN_i, \Nbar_i\}$ are mutually independent.
Thanks to the Poisson splitting property, the evolution of $X_i^N(t)$ can be written as follows:
\begin{equation}
	\label{eq:X_i_n}
	X_i^N(t) = X_i^N(0) - \int_0^t \one_{\{X_i^N(s-)>0\}} \, \cN_i(ds) + \int_{[0,t] \times \Rmb_+} \one_{\{0 \le y \le C_i^N(s-)\}} \, \Nbar_i(ds\,dy),
\end{equation}
where 
\begin{equation}\label{eq:cn}
\begin{split}
	C_i^N(t) & = \one_{\{D_i^N<d-1\}} \bbar_i^N((X_k^N(t))_{k \in [N]},(\xi_{kl}^N)_{k,l \in [N]}) \\
	&  + \one_{\{D_i^N \ge d-1\}} \sum_{(j_2,\dotsc,j_d) \in \Smc_i^N} \alpha^N(i; j_2, j_3, \ldots, j_d) b(X_i^N(t),X_{j_2}^N(t),\dotsc,X_{j_d}^N(t)) \\
	&  + (d-1) \sum_{(j_2,\dotsc,j_d) \in \Smc_i^N} \one_{\{D_{j_2}^N \ge d-1\}} \alpha^N(j_2; i, j_3, \ldots, j_d) b(X_i^N(t),X_{j_2}^N(t),\dotsc,X_{j_d}^N(t)) \\
	&  + \sum_{j_2 \in [N], j_2 \ne i} \one_{\{D_{j_2}^N<d-1\}}\xi^N_{ij_2} \bbar_{ij_2}^N((X_k^N(t))_{k \in [N]},(\xi^N_{kl})_{k,l \in [N]}), \\
	\alpha^N(i; j_2, j_3, \ldots, j_d)&\doteq \frac{\xi^N_{ij_2}\xi^N_{ij_3}\dotsm\xi^N_{ij_d}}{D_i^N(D_i^N-1)\dotsm(D_i^N-d+2)}\\
	\Smc_i^N & \doteq \{ (j_2,\dotsc,j_d) \in [N]^{d-1} : (i,j_2,\dotsc,j_d) \mbox{ are distinct } \}.
\end{split}
\end{equation}
Here $\bbar_i^N$ and $\bbar_{ij}^N$ are measurable functions with 
\begin{equation}
	\label{eq:bbar}
	\bbar_i^N\Big((X_k^N(t))_{k \in [N]},(\xi_{kl}^N)_{k,l \in [N]}\Big), \bbar_{ij}^N\Big((X_k^N(t))_{k \in [N]},(\xi^N_{kl})_{k,l \in [N]}\Big) \in [0,D_i^N+1], 
\end{equation}
which define the rules of assigning tasks when $D_i^N<d-1$ or $D_j^N<d-1$, respectively.
Precise form of these functions will not be important in our analysis.
The second term in the expression for $C^N_i(t)$ gives the probability that a job arriving at server $i$ (with $D^N_i\ge d-1$)
is in fact assigned to server $i$ itself, which will happen if server $i$ is one of the queues with minimal queue length among the $d-1$ randomly selected neighbors and itself, and it is the winner of the tie between queues with minimal queue-lengths in the selection. The third term corresponds to the probability that a job arriving at some other server (say $j_2$, with $D^N_{j_2}\ge d-1$) is assigned to server $i$, which will happen if $i$ is a neighbor of $j_2$, server $i$ is among the random selection of $d-1$ neighbors of $j_2$, it is also among the queues with minimal queue-length in the selection, and it wins the tie-breaker among queues with minimal queue-length in the selection.
We note that although $C_i^N(t)$ takes a complicated form, it essentially depends on the local empirical queue-length and degree distributions of the neighbors, 
which allows the implementation of a 
mean-field type argument. This is one of the key advantages of the Poisson random measure representation in equation \eqref{eq:X_i_n}.


\subsection{Scaling limits for deterministic graph sequences}\label{ssec:det}
In this section we will consider arbitrary deterministic graph sequences, and establish a scaling limit  when the graphs satisfy a certain `regularity' condition  as formulated in Condition~\ref{cond-reg} below.
For any graph $G$, let $d_{\min}(G)$ and $d_{\max}(G)$ denote the minimum and maximum degree, respectively.
\begin{condition}[Regularity of degrees]\label{cond-reg}
The sequence $\{G_N\}_{N\geq 1}$  satisfies the following.
\begin{enumerate}[{\normalfont (i)}]
\item $d_{\min}(G_N)\to\infty$ as $N\to\infty$.
\item $\max_{i\in [N]} \left | \sum_{j\in [N], j\neq i} \frac{\xi_{ji}^N}{D_j^N} - 1\right| \to 0$ as $N\to\infty$. 
\end{enumerate}
\end{condition}

\begin{remark}\normalfont
	\label{rmk:weaker_condition}
	Condition \ref{cond-reg}(ii) holds if for example, $d_{\max}(G_N)/d_{\min}(G_N)\to 1$ as $N\to\infty$, since
	\begin{align*}
		\frac{d_{\min}(G_N)}{d_{\max}(G_N)} \le \frac{D_i^N}{d_{\max}(G_N)} 
		\le \sum_{j \in [N], j \ne i} \frac{\xi_{ji}^N}{D_j^N} \le \frac{D_i^N}{d_{\min}(G_N)} \le \frac{d_{\max}(G_N)}{d_{\min}(G_N)}
	\end{align*}
	for each $i \in [N]$.
	But Condition \ref{cond-reg}(ii) also allows $G_N$ to have degrees of very different orders in different components of the graph. 
For example, if $\{\mathcal{C}^N_k\}_{k\geq 1}$ denote the  connected components of $G_N$, then
Condition~\ref{cond-reg} (ii) is satisfied if
\[\sup_{k\geq 1} \left|\frac{d_{\min}(\mathcal{C}_k^N)}{d_{\max}(\mathcal{C}_k^N)}- 1\right| \to 0 \qquad\mbox{as}\qquad N\to\infty.\]
\end{remark}

Our first  result establishes under Condition~\ref{cond-reg}, the convergence of the occupancy state process $\qq^N$ to the same deterministic limit as for the classical JSQ($d$) policy (i.e. the case when $G_N$ is a clique), as $N\to\infty$.
\begin{theorem}[Convergence of global occupancy states]
\label{th:deterministic}
Assume that the sequence of graphs $\{G_N\}_{N\geq 1}$ satisfies Condition~\ref{cond-reg}, and $\{X_i^N(0):i\in [N]\}$ is iid with $\Pro{X_i^N(0) \geq j}=q^{\infty}_j$, $j=1,2,\ldots,$ for some $\qq^{\infty}\in S$. 
Then on any finite time interval, the occupancy state process $\qq^N(\cdot)$ converges weakly with respect to Skorohod $J_1$ topology to the deterministic limit $\qq(\cdot)$ given by the unique solution to the set of ODE:
	\begin{equation}\label{eq:occ-deterministic}
	\frac{dq_i(t)}{dt} = \lambda [(q_{i-1}(t))^d - (q_i(t))^d] - (q_{i}(t)-q_{i+1}(t)),\quad i = 1, 2,\ldots,
	\end{equation}	 
	and $\qq(0) = \qq^\infty$.
\end{theorem}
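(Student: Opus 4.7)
My plan is to combine a standard mean-field semi-martingale decomposition with a carefully proved local-to-global concentration estimate for the neighborhood empirical measures, and then close the argument via tightness and uniqueness of the limiting ODE. Starting from the Poisson representation in \eqref{eq:X_i_n}, for each $j\ge 1$ I would decompose
\[
    q_j^N(t) = q_j^N(0) + \int_0^t\left[\lambda U_{j-1}^N(s) - \lambda U_j^N(s) - (q_j^N(s)-q_{j+1}^N(s))\right]ds + M_j^N(t),
\]
where $U_j^N(s)$ is the empirical fraction of ``arrival selections'' whose $d$ probed servers all have queue length at least $j$, and $M_j^N$ is a martingale with $\E[\sup_{s\le T}|M_j^N(s)|^2] = O(1/N)$ by a standard quadratic-variation bound. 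Since the service term is already in its ODE form, the main task is to show $U_j^N(s)\to (q_j(s))^d$ in probability, uniformly on compact time intervals.

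After discarding the vanishing fraction of vertices with $D_i^N<d-1$ (justified by Condition~\ref{cond-reg}(i)) and absorbing the $O(1/d_{\min}(G_N))$ error from sampling without replacement versus with replacement, one can write $U_j^N(s) \approx N^{-1}\sum_i Y_i(s)(\bar Y_i(s))^{d-1}$, where $Y_i(s)=\one_{\{X_i^N(s)\ge j\}}$ and $\bar Y_i(s) = (D_i^N)^{-1}\sum_{k\sim i} Y_k(s)$ is the local neighborhood average. The crucial reduction is the local-to-global concentration
\[
    \Phi_N(s)\doteq \frac{1}{N}\sum_i \bigl(\bar Y_i(s) - q_j^N(s)\bigr)^2 \pto 0
    \quad\text{uniformly in }s\in[0,T],
\]
which combined with the telescoping identity $a^{d-1}-b^{d-1}=(a-b)\sum_{m}a^m b^{d-2-m}$ and Cauchy--Schwarz yields $|U_j^N(s)-(q_j^N(s))^d|\le C_d\sqrt{\Phi_N(s)}\to 0$. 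The first-moment piece is immediate: $N^{-1}\sum_i \bar Y_i(s) - q_j^N(s) = N^{-1}\sum_k Y_k(s)\bigl(\sum_{i\sim k}(D_i^N)^{-1}-1\bigr) = o(1)$ uniformly, by Condition~\ref{cond-reg}(ii). For the second-moment piece,
\[
    \frac{1}{N}\sum_i \bar Y_i(s)^2 - (q_j^N(s))^2 = \frac{1}{N}\sum_{k_1,k_2}Y_{k_1}Y_{k_2}\bigl[w(k_1,k_2)-1/N\bigr],\quad w(k_1,k_2)\doteq\sum_i \frac{\xi^N_{ik_1}\xi^N_{ik_2}}{(D_i^N)^2},
\]
and the identity $\sum_{k_1,k_2}[w(k_1,k_2)-1/N] = 0$, which is exact since $\sum_{k_1,k_2}w=\sum_i (D_i^N)^{-2}(D_i^N)^2 = N$, gives an exact cancellation of the leading-order term; the residual is then controlled by approximate independence of the pairs $(Y_{k_1},Y_{k_2})$.

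The main obstacle is precisely propagating this approximate independence from the iid initial conditions to positive times, since the graph dynamics generate short-range correlations along edges. To handle this I would derive a Gr\"onwall-type differential inequality for $\E[\Phi_N(s)]$ via Dynkin's formula applied to the underlying Markov generator: the cross-terms produced by the dynamics are controlled by $\E[\Phi_N(s)]$ itself up to error terms that vanish under Condition~\ref{cond-reg}, so Gr\"onwall's inequality delivers $\sup_{s\le T}\E[\Phi_N(s)]\to 0$. Once the concentration is in hand, standard arguments complete the proof: tightness of $\{\qq^N\}$ in $\Dmb([0,T],S)$ follows from the semi-martingale decomposition (Aldous's criterion applied coordinate-wise, together with $\ell_1$-tightness via the a priori bound $\sum_j q_j^N(t)\le \sum_j q_j^N(0)+\lambda t$), every weak limit point satisfies \eqref{eq:occ-deterministic} with initial condition $\qq^\infty$, and standard Lipschitz/Picard-type uniqueness for this infinite ODE system identifies the weak limit as the deterministic $\qq(\cdot)$.
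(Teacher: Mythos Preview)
Your overall architecture (semi-martingale decomposition, identification of the drift, martingale vanishing, tightness, uniqueness of the ODE) is sound and is a genuinely different route from the paper's. The paper does \emph{not} work directly with the occupancy process. Instead it first proves Theorem~\ref{th:tagged}: it couples each $X_i^N$ synchronously with the McKean--Vlasov process $X_i$ in \eqref{eq:limit-tagged} using the same Poisson drivers, and shows $\max_i \E\|X_i^N-X_i\|_{*,T}^2\to 0$ by a Gr\"onwall argument on $|C_i^N(s)-C_i(s)|$. The crucial point is that the variance terms $U_s,V_s$ in Lemma~\ref{lem:prep_quench} are computed for the \emph{limiting} iid variables $\{X_i(s)\}$, so the cross-terms in \eqref{eq:prep_quench_simplify} vanish exactly; the price is the $|X_i^N-X_i|$ discrepancy, which is absorbed by Gr\"onwall. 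Theorem~\ref{th:deterministic} is then a corollary: $\mu^N\to\mu$ in bounded-Lipschitz metric, and one checks directly that $\tilde q_j(t)=\mu_t[j,\infty)$ solves \eqref{eq:occ-deterministic}.

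The gap in your proposal is precisely at the step you flag as the ``main obstacle'': closing a Gr\"onwall inequality for $\E[\Phi_N(s)]$ via Dynkin's formula. Applying the generator to $\Phi_N(s)=N^{-1}\sum_i(\bar Y_i-q_j^N)^2$ produces drift terms of the form $N^{-1}\sum_i 2(\bar Y_i-q_j^N)\cdot\bigl[\text{drift of }\bar Y_i-q_j^N\bigr]$, and the drift of $\bar Y_i=D_i^{-1}\sum_{k\sim i}Y_k$ involves $D_i^{-1}\sum_{k\sim i}\lambda\one_{\{X_k^N=j-1\}}C_k^N(s)$. But $C_k^N$ is itself a $(d-1)$-fold local average over neighbors of $k$ \emph{and} over neighbors of neighbors of $k$ (the third line of \eqref{eq:cn}), so after summing over $i$ you are left with two- and three-hop correlation functionals that are not dominated by $\Phi_N$ alone. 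In other words, the differential inequality does not close on the single functional $\Phi_N$; you would need to track a hierarchy of multi-hop local-occupancy functionals, and it is not clear how Condition~\ref{cond-reg} by itself controls those. Your ``exact cancellation'' $\sum_{k_1,k_2}[w(k_1,k_2)-1/N]=0$ reduces the second-moment computation to a weighted covariance $\sum_{k_1,k_2}\mathrm{Cov}(Y_{k_1},Y_{k_2})[w(k_1,k_2)-1/N]$, but at positive times $\mathrm{Cov}(Y_{k_1},Y_{k_2})$ is itself correlated with the graph weight $w(k_1,k_2)$ (nearby pairs have both larger $w$ and larger covariance), and bounding this correlation is essentially propagation of chaos --- which is what the paper establishes \emph{via} the coupling, not as an input. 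The coupling with the iid $\{X_i\}$ is exactly the device that converts these intractable prelimit correlations into genuine iid variances that are $O(1/d_{\min}(G_N))$; without it, your Gr\"onwall step is not justified as written.
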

\begin{remark}\normalfont
We make the following observations.
	\begin{enumerate}[\normalfont (i)]
\item Unique solvability of the system of equations	\eqref{eq:occ-deterministic} is a consequence of Lipschitz continuity of the right side.
Specifically, define the function $\FF(\cdot) = (F_1(\cdot), F_2(\cdot), \ldots)$ on $S$ as 
\[F_i(\qq) = \lambda(q_{i-1}^d - q_i^d) - (q_i - q_{i+1}), \quad i= 1,2, \ldots,\] 
with $\qq\in S$ and $F_i(\qq)$ being the $i$-th component of $F(\qq)$. 
It is easily seen that $F$ is  Lipschitz on $S$ (equipped with the  $\ell_1$ distance).
Standard results then imply that the system of ODE  defined by $\dif\qq(t)/\dif t = \FF(\qq)$ admits a unique solution.

\item The above result shows in particular  that the evolution of the asymptotic global occupancy process as described by~\eqref{eq:occ-deterministic} coincides with that when the underlying graph is a clique, i.e., when each arriving task can probe any set of $d$ servers.
Thus under Condition~\ref{cond-reg}, the system exhibits the same asymptotic transient performance even when the underlying graph is much sparser.
As an immediate corollary we see that \eqref{eq:occ-deterministic} describes  the asymptotic system occupancy process associated with {\em arbitrary} $d(N)$-regular graphs as long as $d(N)\to\infty$ as $N\to\infty$.
\end{enumerate}
\end{remark}

\begin{remark}\normalfont\label{rem:compare-20}
Now we contrast Condition \ref{cond-reg} with
the condition introduced in \cite{MBL17} for the JSQ policy on a graph to behave as that on a clique.
We note that Condition~\ref{cond-reg} relies only on local properties of the graph, and in particular may hold even when, for example, the graph contains several connected components of sizes that grow to infinity with $N$.
In contrast, the condition in~\cite{MBL17} requires that any two $\Theta(N)$-sized component must share $\Theta(N)$ cross-edges, which does not hold in many networks with connectivity governed by spatial attributes, such as geometric graphs.
In this sense, Condition~\ref{cond-reg} includes much broader class of graphs including arbitrary $d(N)$-regular graphs with  $d(N)\to\infty$, as mentioned above.
On the other hand, our condition requires the minimum degree in the graph to diverge to infinity, whereas~\cite{MBL17} allows any $o(N)$ vertices to have bounded degree (or degree zero).
As noted in the introduction, it is easy to see that the queue length process of the JSQ policy on a clique is better balanced (in stochastic majorization sense) than on any other graph. 
This is also reflected by the fact that the sufficient criterion for fluid optimality as developed in~\cite{MBL17} is monotone with respect to edge addition.
Specifically, let $\{G_N=(V_N, E_N)\}_{N\geq 1}$ be a graph sequence which satisfies the sufficient criterion in~\cite{MBL17} for the limit of the  occupancy process coincides with that for cliques. 
Then \cite{MBL17} shows that for any graph sequence $\{\bar{G}_N = (V_N, \bar{E}_N)\}_{N\geq 1}$ with $E_N\subseteq\bar{E}_N$, the limit of the  occupancy process also coincides with that for cliques.
The above property is not immediate for systems considered in the current work since  adding edges arbitrarily may  result in violating Condition~\ref{cond-reg} (ii).
\end{remark}

\begin{remark}\label{rem:directed}\normalfont
Although in this paper we consider the case of undirected graphs, i.e., we assume $\xi_{ij}^N = \xi_{ji}^N$ for all $i,j\in [N]$, we note that the results naturally extends to the directed graph scenario.
Indeed, most of the proofs go through unchanged without the symmetry assumption for $\xi$ and remaining require minor modification.
In this remark we discuss how Condition~\ref{cond-reg} and the simplified condition in Remark~\ref{rmk:weaker_condition} need to be modified in the case of directed graphs in order for the conclusion of Theorem~\ref{th:deterministic} to hold.

For $i\in [N]$, denote $D_{i,\inn}^N\doteq \sum_{j\in[N]}\xi_{ji}^N$ and 
$D_{i,\out}^N\doteq \sum_{j\in[N]}\xi_{ij}^N$.
Also, for a graph $G_N$, denote $d_{\min}^{\inn}(G_N)\doteq \min_i D_{i,\inn}^N$ and $d_{\max}^{\inn}(G_N)\doteq \max_i D_{i,\inn}^N$, and define $d_{\min}^{\out}(G_N)$ and $d_{\max}^{\out}(G_N)$ similarly.
Assume that whenever a task arrives at a server~$i\in [N]$, it is immediately assigned to the server with the shortest queue among server $i$ and $d-1$ servers selected uniformly at random from the set of vertices $\{j:\xi_{ij}^N=1\}$ (ties are broken arbitrarily).
As before, arrivals to any server with $D_{i,\out}^N<d-1$ can be assigned arbitrarily.
Now, the conclusion of Theorem~\ref{th:deterministic} holds when the graph sequence $\{G_N\}_{N\geq 1}$ of directed graphs satisfies the following condition:  

\begin{quote}
\textbf{Condition 1`} (Criteria for directed graphs).
The graph sequence $\{G_N\}_{N\geq 1}$  satisfies the following.
\begin{enumerate}[{\normalfont (i)}]
\item $\min\big\{d_{\min}^{\inn}(G_N),d_{\min}^{\out}(G_N)\big\}\to\infty$ as $N\to\infty$.
\item $\max_{i\in [N]} \left | \sum_{j\in [N], j\neq i} \frac{\xi_{ji}^N}{D_{j,\out}^N} - 1\right| \to 0$ as $N\to\infty$. 
\end{enumerate}
\end{quote}
Similar to the condition discussed in Remark \ref{rmk:weaker_condition}, a weaker but simpler sufficient condition implying Condition 1` (ii) is the following: 
\[\frac{\min\big\{d_{\min}^{\inn}(G_N),d_{\min}^{\out}(G_N)\big\}}{\max\big\{d_{\max}^{\inn}(G_N),d_{\max}^{\out}(G_N)\big\}}\to 1\quad\mbox{as}\quad N\to\infty.\]
\end{remark}

Our second result gives the joint asymptotic  behavior of queue length processes for any finite collection of servers.
In particular, it shows that the propagation of chaos holds, i.e., the queue length processes for any finite collection of servers are asymptotically statistically independent.
Recall the sequence of Poisson processes $\{\cN_i\}$, Poisson random measures $\{\Nbar_i\}$, and the function $b$.

\begin{theorem}[Evolution of tagged servers]\label{th:tagged}
Assume that the  sequence of graphs $\{G_N\}_{N\geq 1}$ satisfies Condition~\ref{cond-reg},  and $\{X_i^N(0):i\in [N]\}$ is iid with $\Pro{X_i^N(0) \geq j}=q^{\infty}_j$, $j=1,2,\ldots,$ for some $\qq^{\infty}\in S$. 
Then the following convergence results hold.
\begin{enumerate}[{\normalfont (i)}]
\item On any finite time interval, the queue length process  $X_i^N(\cdot)$ at server $i$ converges weakly with respect to Skorohod $J_1$ topology to the following  McKean-Vlasov process:
\begin{equation}\label{eq:limit-tagged}
\begin{split}
	X_i(t) & = X_i(0) - \int_0^t \one_{\{X_i(s-)>0\}} \, \cN_i(ds) + \int_{[0,t]\times\Rmb_+} \one_{\{0 \le y \le C_i(s-)\}} \, \Nbar_i(ds\,dy), \\
	C_i(t) & = d\int_{\Nmb^{d-1}} b(X_i(t),x_2,\dotsc,x_d) \mu_t(dx_2)\dotsm\mu_t(dx_d) ,
\end{split}
\end{equation}
where $\mu_t = \Lmc(X_i(t))$ and $\mu_0[j,\infty) = q^{\infty}_j$ for $t\ge 0$ and $j \in \Nmb_0$.
\item For any  $m$-tuple $(i_1,\dotsc,i_m)\in \N^m$ with $i_j \ne i_k$ whenever $j \ne k$,
		\begin{equation*}
			\Lmc(X_{i_1}^N(\cdot),\dotsc,X_{i_m}^N(\cdot)) \to \mu^{\otimes m},
		\end{equation*}
		as probability measures on $\Dmb([0, \infty): \Nmb_0^m)$ where $\mu$ is the probability law of $X_1(\cdot)$ in part (i).
	\item For any $i\in\N$, the process $\mu^{i,N}$ denoting the occupancy measure process for the neighborhood of the $i$-th server, defined as
	\begin{equation}
	\label{eq:nbdempmzr}
	\mu^{i,N}_t \doteq   \frac{1}{D_i^N+1} \sum_{j \in [N], j \ne i} \xi_{ij}^N \delta_{X_j^N(t)} + 	\frac{1}{D_i^N+1}\delta_{X_i^N(t)}, \; t\ge 0,
	\end{equation}
	converges weakly with respect to Skorohod $J_1$ topology to the deterministic limit $\mu_{\cdot}$, where for $t\ge 0$,
	$\mu_t$ is as in part (i).
\end{enumerate}
\end{theorem}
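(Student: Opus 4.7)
My strategy is to prove the three parts in the order (iii), (i), (ii), leveraging the global convergence $\qq^N \to \qq$ from Theorem~\ref{th:deterministic} together with the Poisson random measure representation~\eqref{eq:X_i_n}. The central insight is that once the neighborhood occupancy measure $\mu^{i,N}_t$ is shown to converge to the deterministic limit $\mu_t$, the driving rate $C_i^N(t)$ in~\eqref{eq:X_i_n} converges to the McKean-Vlasov rate $C_i(t)$, after which the rest reduces to standard weak convergence and martingale-problem arguments.

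For part (iii), I would compare $\mu^{i,N}_t$ to the global empirical measure $\nu^N_t \doteq N^{-1} \sum_{j=1}^N \delta_{X_j^N(t)}$, whose tail probabilities converge to $\mu_t[k,\infty)$ by Theorem~\ref{th:deterministic}. For each $k \in \Nmb_0$,
\[
\mu^{i,N}_t[k, \infty) - \nu^N_t[k,\infty) = \sum_{j \ne i} \left( \frac{\xi_{ij}^N}{D_i^N+1} - \frac{1}{N} \right) \one_{\{X_j^N(t) \ge k\}} + O(1/N).
\]
A second-moment estimate handles the right-hand side: the diagonal contribution is $O(1/d_{\min}(G_N))$, which vanishes by Condition~\ref{cond-reg}(i); the off-diagonal contribution is controlled via a covariance bound on pairs of queue-length indicators at distinct vertices, where the graph-mediated correlations decay through the $1/(D_i^N+1)$ weights. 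Summing over $k$ (using tightness in $\ell^1$, which follows from $\sum_k q_k^\infty < \infty$) then yields $\mu^{i,N} \Rightarrow \mu$ in $\Dmb([0,\infty), S)$.

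For part (i), given (iii), the key is to show $C_i^N(t) \to C_i(t)$. Inspecting~\eqref{eq:cn}, the first and fourth terms are supported on vertices of degree less than $d-1$ and vanish by Condition~\ref{cond-reg}(i). The second term, by symmetry of $b$ in its last $d-1$ arguments, equals $\int b(X_i^N(t), x_2,\dots,x_d) \prod_{k=2}^{d} \mu^{i,N}_t(dx_k) + o(1)$. The third term, after applying Condition~\ref{cond-reg}(ii) to reinterpret the weighted sum $\sum_{j_2} \xi_{ij_2}^N/[D_{j_2}^N(D_{j_2}^N-1)\cdots]$ as an approximate uniform average over the neighbors of $i$, contributes $(d-1)$ times the same integral, producing the total factor $d$ in $C_i(t)$. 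Combining this with (iii), Lipschitz continuity of $b$, Aldous' tightness criterion (the arrival and service rates for $X_i^N$ are uniformly bounded), and uniqueness of~\eqref{eq:limit-tagged} via a Gronwall estimate in total variation on $\mu_t$, yields $X_i^N \Rightarrow X_i$.

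Part (ii) then follows from the mutual independence of the driving noises $\{\cN_i, \Nbar_i\}$ and the deterministic character of $\mu_t$: joint tightness in $\Dmb([0,\infty), \Nmb_0^m)$ combined with individual convergence from (i) forces the joint law to factorize into $m$ independent copies of the McKean-Vlasov process. The main obstacle I anticipate is the covariance estimate in part (iii). Since the graph destroys exchangeability, this bound cannot be obtained from a symmetry argument but must come directly from the semimartingale dynamics in~\eqref{eq:X_i_n}, requiring a careful accounting of how fluctuations at vertex $j$ propagate through the weighted averages $C_l^N(t)$ to other vertices, in a manner reminiscent of the estimates underlying the proof of Theorem~\ref{th:deterministic} itself.
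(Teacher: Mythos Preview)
Your proposal has a structural circularity: you invoke Theorem~\ref{th:deterministic} as input to prove part~(iii), but in this paper Theorem~\ref{th:deterministic} is itself deduced \emph{from} Theorem~\ref{th:tagged}. The paper explicitly proves Theorem~\ref{th:tagged} first (see the sentence ``For technical convenience we will provide the proof of Theorem~\ref{th:tagged} first, and then use that to establish Theorem~\ref{th:deterministic}'') and the proof of Theorem~\ref{th:deterministic} rests on the estimate~\eqref{eq:added} obtained in the course of proving part~(i). So unless you have an independent route to the global limit $\qq^N\to\qq$, your argument for (iii) assumes what ultimately needs to be shown.

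Even setting circularity aside, the step you flag as the ``main obstacle'' is in fact where your approach breaks down. To control the off-diagonal part of
\[
\sum_{j\ne i}\Big(\tfrac{\xi_{ij}^N}{D_i^N+1}-\tfrac{1}{N}\Big)\one_{\{X_j^N(t)\ge k\}}
\]
you would need quantitative bounds on $\mathrm{Cov}(\one_{\{X_j^N(t)\ge k\}},\one_{\{X_l^N(t)\ge k\}})$ for the \emph{pre-limit} processes, and more delicately, you would need to know that the one-point marginals $\E[\one_{\{X_j^N(t)\ge k\}}]$ do not vary systematically between neighbors and non-neighbors of $i$. Neither is available a priori on a non-exchangeable graph; obtaining them is essentially as hard as the theorem itself. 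The paper's device is to avoid this altogether: it couples $X_i^N$ and the McKean--Vlasov process $X_i$ through the same Poisson noises with $X_i^N(0)=X_i(0)$, and in the key variance bound (Lemma~\ref{lem:prep_quench}) replaces $X_j^N(s)$ by $X_j(s)$ at the cost of a Lipschitz error. Since $\{X_j(s)\}_j$ are genuinely i.i.d., the cross terms in $U_s$ and $V_s$ vanish exactly (equation~\eqref{eq:prep_quench_simplify}), and a Gronwall argument on $\max_i \E\|X_i^N-X_i\|_{*,t}^2$ closes the loop. Parts~(ii) and~(iii) then follow from~(i), not the other way around.
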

\begin{remark}\label{rem:rem2}\normalfont
We note the following.
\begin{enumerate}[{\normalfont (i)}]
		\item The existence and uniqueness of solutions to \eqref{eq:limit-tagged} can be proved by standard arguments using the boundedness and Lipschitz property of the functions $b$ and $x\mapsto \one_{\{x>0\}}$ on $\Nmb_0$.
		\item Using the propagation of chaos property and the fact that $\{X_i(t):i\in [N]\}$ are iid, it follows
		that
		the limit of the global occupancy measure  at any time instant $t$ is in fact the law of $X_i(t)$ for any fixed $i$.
		Therefore,
		$$\mu_t[j,\infty)= \Pro{X_i(t)\geq j} = q_j(t),\;  j  \in \Nmb_0, i \in \Nmb \mbox{ and } t \ge 0.$$
	\end{enumerate}
\end{remark}

\subsection{Scaling limits for random graph sequences}\label{ssec:random}
Next we will consider the scenario when the underlying graph topology is random.
We consider asymptotics of both annealed and quenched laws of the occupancy process and the queue length process at any tagged server.
The following is our main condition in the study of the annealed law.
\begin{condition}[Diverging mean degree]\label{cond:errg1}
 $\{G_N\}_{N\geq 1}$ is a sequence of Erd\H{o}s-R\'enyi random graphs (ERRG) where any two vertices share an edge with probability $p_N$, and  $Np_N\to\infty$ as $N\to\infty$. 	$\{G_N\}_{N\geq 1}$ is independent of $\{X^N_j(0), \cN_i, \Nbar_i, j \in [N], N \in \Nmb, i \in \Nmb\}$.
\end{condition}


\begin{theorem}[Asymptotics of annealed law]\label{thm:npn_rate}
Assume that the  sequence of graphs $\{G_N\}_{N\geq 1}$ satisfies Condition~\ref{cond:errg1}, and $\{X_i^N(0):i\in [N]\}$ is iid with $\Pro{X_i^N(0) \geq j}=q^{\infty}_j$, $j=0, 1,2,\ldots,$ for some $\qq^{\infty}\in S$. 
Then the following  hold.
\begin{enumerate}[{\normalfont (i)}]
\item For any $T\in (0,\infty)$
		\begin{equation}
		\label{eq:npn_rate}
		\sup_{N \ge 1} \max_{i \in [N]} \sqrt{Np_N} \Ebf \|X_i^N - X_i\|_{*,T}^2 < \infty,
		\end{equation}
		where $X_i$ is as defined in \eqref{eq:limit-tagged}.
\item For any $m$-tuple $(i_1,\dotsc,i_m)\in \N^m$ with $i_j \ne i_k$ whenever $j \ne k$, 		
\begin{equation*}
			\Lmc(X_{i_1}^N(\cdot),\dotsc,X_{i_m}^N(\cdot)) \to \mu^{\otimes m},
		\end{equation*}
		as probability measures on $\Dmb([0, \infty): \Nmb_0^m)$ where $\mu$ is as in Theorem \ref{th:tagged}.
	\item For any $i\in\N$, the law of the neighborhood occupancy measure process  defined as in \eqref{eq:nbdempmzr}
	converges weakly in Skorohod $J_1$ topology to the deterministic limit $\mu_{\cdot}$. 
\end{enumerate}
\end{theorem}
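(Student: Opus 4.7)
The strategy is to establish part (i) first by constructing $X_i^N$ and the McKean-Vlasov process $X_i$ from Theorem \ref{th:tagged} on a common probability space driven by the same Poisson noise $\cN_i,\Nbar_i$. The $1/\sqrt{Np_N}$ rate is injected through concentration of local empirical measures arising from the Erd\H{o}s-R\'enyi structure, and propagated in time by a Gronwall argument. Parts (ii) and (iii) then follow from part (i) together with the independence of $\{X_i\}_{i\in\Nmb}$ and a Glivenko-Cantelli-type estimate.

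\textbf{Coupling and reduction.} Since both $X_i^N$ and $X_i$ are $\Nmb_0$-valued jump processes driven by the same Poisson noise, subtracting \eqref{eq:limit-tagged} from \eqref{eq:X_i_n} gives pathwise
$$|X_i^N(t) - X_i(t)| \leq N_1^{i,N}(t) + N_2^{i,N}(t),$$
where $N_1^{i,N}$ and $N_2^{i,N}$ are the counting processes of disagreement jumps from $\cN_i$ and $\Nbar_i$, with predictable intensities bounded by $|X_i^N(s) - X_i(s)|$ (using $|\mathbf{1}_{\{x>0\}} - \mathbf{1}_{\{y>0\}}| \leq |x-y|$ on $\Nmb_0$) and $\lambda|C_i^N(s) - C_i(s)|$, respectively. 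Both counts are monotone, so taking $\sup_{t\leq T}$ and expectation reduces the analysis to estimating $\Ebf|C_i^N - C_i|$ together with a self-coupling term $\Ebf\|X_i^N - X_i\|_{*,s}$.

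\textbf{Controlling $\Ebf|C_i^N - C_i|$.} Let $\hat{C}_i^N$ be the expression in \eqref{eq:cn} with each $X_j^N$ replaced by the McKean-Vlasov copy $X_j$, and split $\Ebf|C_i^N - C_i| \leq \Ebf|C_i^N - \hat{C}_i^N| + \Ebf|\hat{C}_i^N - C_i|$. The $1$-Lipschitz property of $b$ combined with the probability-weight structure of \eqref{eq:cn} and permutation invariance of the ERRG gives $\Ebf|C_i^N - \hat{C}_i^N| \leq \kappa\, \Ebf\|X_1^N - X_1\|_{*,s}$. The key concentration bound is $\Ebf|\hat{C}_i^N - C_i| \leq \kappa/\sqrt{Np_N}$: conditional on the graph and on $X_i$, the neighborhood queue lengths $\{X_j\}_{j\sim i}$ are iid $\mu_s$ and independent of the edge randomness; with $D_i^N \sim \mathrm{Bin}(N-1,p_N)$ and $Np_N\to\infty$, both degree fluctuations and inner sample averages concentrate at $L^2$ rate $(Np_N)^{-1/2}$. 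The third term of \eqref{eq:cn}, a double sum over neighbors of $i$ and then of those neighbors, is handled by iterating, with $\Ebf|\sum_{j\sim i}1/D_j^N - 1| \to 0$ (the ERRG analog of Condition \ref{cond-reg}(ii)) controlling the weights. Plugging into the $L^1$ Gronwall inequality obtained above yields $\Ebf\|X_1^N - X_1\|_{*,T} \leq \kappa_T/\sqrt{Np_N}$, and the $L^2$ upgrade to part (i) follows from the compensator decomposition $\Ebf(N_k^{i,N}(T))^2 \leq \Ebf\int|\cdot|\,ds + (\Ebf\int|\cdot|\,ds)^2$ after using $|\mathbf{1}_{\{X>0\}}-\mathbf{1}_{\{Y>0\}}|^2 = |\mathbf{1}_{\{X>0\}}-\mathbf{1}_{\{Y>0\}}|$.

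\textbf{Parts (ii), (iii), and main obstacle.} For (ii), the coupling estimate from part (i) gives $X_i^N \to X_i$ in $\Dmb([0,T],\Nmb_0)$ in probability, and the independence of the $\{X_i\}$ forces joint convergence to the product law $\mu^{\otimes m}$. For (iii), write $\mu^{i,N}_t = \hat{\mu}^{i,N}_t + (\mu^{i,N}_t - \hat{\mu}^{i,N}_t)$ where $\hat{\mu}^{i,N}_t$ uses $\{X_j\}$ in place of $\{X_j^N\}$: the second summand vanishes by part (i) and symmetry, while $\hat{\mu}^{i,N}_t$ is an empirical measure of iid draws from $\mu_t$ over a sample of size $\sim Np_N\to\infty$, converging to $\mu_t$ by a standard Glivenko-Cantelli argument (with process-level tightness from the uniform $L^2$ bounds of part (i)). The principal technical difficulty is the concentration estimate $\Ebf|\hat{C}_i^N - C_i| \leq \kappa/\sqrt{Np_N}$ for the double-sum structure of \eqref{eq:cn}: one must carefully exploit both the mutual independence of the Bernoulli edges $\{\xi^N_{ij}\}$ and the conditional independence of $\{X_j\}$ given $X_i$, while taming the delicate correlations arising from the denominators $D_{j_2}^N$, which involve the very edge variables being summed.
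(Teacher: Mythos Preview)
Your overall strategy---coupling $X_i^N$ and $X_i$ via common Poisson noise, inserting the intermediate $\hat C_i^N$ (which is the paper's $C_i^{N,1}$), and exploiting ERRG exchangeability together with concentration of $\sum_{j}\xi_{ji}^N/D_j^N$---matches the paper's proof, and parts (ii) and (iii) are indeed routine once (i) is in hand.

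The gap is in your $L^2$ upgrade. The inequality $\Ebf(N_k^{i,N}(T))^2 \le \Ebf\int|\cdot|\,ds + \bigl(\Ebf\int|\cdot|\,ds\bigr)^2$ is false for a counting process with \emph{random} compensator $A(T)=\int_0^T\gamma\,ds$: the correct second term is $\Ebf A(T)^2$, not $(\Ebf A(T))^2$, and Jensen's inequality goes the wrong way. This is not cosmetic here, because in the ERRG setting $C_i^N$ is \emph{not} uniformly bounded (the low-degree boundary contributions $\bbar_i^N,\bbar_{ij}^N$ and the random weight $\sum_j\xi_{ji}^N/D_j^N$ can be large on rare graphs), in contrast to the deterministic case where $|C_i^N|\le 2d$ eventually. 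Consequently $\Ebf\bigl(\int_0^T|C_i^N-C_i|\,ds\bigr)^2$ genuinely requires a bound on $\Ebf|C_i^N-C_i|^2$; through the Lipschitz estimate on $b$ this feeds back into $\Ebf|X_i^N-X_i|^2$, so an $L^2$ Gronwall is unavoidable. The paper runs it directly: its starting estimate \eqref{eq:npn_1} carries both $\int\Ebf|C_i^N-C_i|\,ds$ and $\int\Ebf|C_i^N-C_i|^2\,ds$. To close that loop the paper also needs an ingredient you do not mention: the a~priori fourth-moment bound $\sup_N\max_i\Ebf\|X_i^N-X_i\|_{*,T}^4<\infty$ of Lemma~\ref{lem:momentbd}, invoked via Cauchy--Schwarz to control the cross term $\Ebf\bigl[\bigl(\sum_j\one_{\{D_j>0\}}\xi_{ji}^N/D_j^N-1\bigr)|X_i^N-X_i|^2\bigr]$ that appears when bounding $\Ebf|C_i^N-\hat C_i^N|^2$ under the random weights. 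Your $L^1$-first route can be repaired, but only after reinstating these second- and fourth-moment controls, at which point it becomes the paper's direct $L^2$ argument.
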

\begin{remark}\normalfont
We make the following observations.
	\begin{enumerate}
\item	In contrast to standard convergence results for weakly interacting diffusions (see e.g.~\cite{Sznitman1991} or \cite{BhamidiBudhirajaWu2016}), the estimate in \eqref{eq:npn_rate} gives a rate of convergence of $\sqrt{Np_N}$ instead of $Np_N$. The reason for this can be seen from the proof which shows that  the bound for the  quantity $\Ebf \|X_i^N - X_i\|_{*,T}^2$ is controlled by $\Ebf |C_i^N(s)-C_i(s)|$ rather than $\Ebf |C_i^N(s)-C_i(s)|^2$, due to the form of indicator function in the evolution of $X_i^N$ (cf.~\eqref{eq:X_i_n}).
	\item Condition needed for   Theorem~\ref{thm:npn_rate} should be contrasted with that for Theorems \ref{th:deterministic}
	and \ref{th:tagged}.
	In particular, for the study of the  annealed law asymptotics we only need information on the average degree rather than on the  maximal and minimal degree of the graph.
	\item   All the limit theorems established in the current work have the feature that as long as there is interaction between `enough' particles the asymptotic behavior is same as that of a fully connected system. In settings where the interaction graph is very sparse, one expects  different types of asymptotic behavior. Consider for example one extreme case   when the graph is a collection of disjoint cliques of size $d$.
	 In this case the system decomposes into i.i.d.~copies of a JSQ system with $d$ servers and the limit behavior is very different.  For example, a propagation of chaos result of the form in Theorem \ref{th:tagged}(ii) is clearly false.
\end{enumerate}
\end{remark}

 We will now consider the asymptotic behavior of the quenched law of the occupancy process. For this we formulate a condition that is stronger than the one used in the study of the annealed asymptotics.
\begin{condition}[Condition for quenched limit]\label{cond:errg2}
 $\{G_N\}_{N\geq 1}$ is a sequence of Erd\H{o}s-R\'enyi random graphs, such that in $G_N$ any two vertices share an edge with probability $p_N$, and  $Np_N/\ln(N)\to\infty$ as $N\to\infty$. $\{G_N\}_{N\geq 1}$ is independent of $\{X^N_j(0), \cN_i, \Nbar_i, j \in [N], N \in \Nmb, i \in \Nmb\}$.
\end{condition}
The following theorem provides, under the above condition, the asymptotic behavior of the quenched law.
\begin{theorem}[Asymptotics of quenched law]\label{thm:npn_rate_quench}
Assume that the  sequence of graphs $\{G_N\}_{N\geq 1}$ satisfies Condition~\ref{cond:errg2}, and $\{X_i^N(0):i\in [N]\}$ is iid with $\Pro{X_i^N(0) \geq j}=q^\infty_j$, $j=0,1,2,\ldots,$ for some $\qq^\infty\in S$ for all $N$. 
Then the convergence results as stated in Theorems~\ref{th:deterministic} and~\ref{th:tagged} hold for almost every realization of the random graph sequence. 
\end{theorem}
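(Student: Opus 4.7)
The plan is to reduce the quenched statement to Theorems~\ref{th:deterministic} and~\ref{th:tagged} by showing that, under Condition~\ref{cond:errg2}, the deterministic-graph regularity Condition~\ref{cond-reg} holds almost surely for the realized graph sequence $\{G_N(\omega)\}_{N\ge 1}$. Since the random graph is assumed independent of the Poisson data $\{\cN_i,\Nbar_i\}$ and of the initial conditions driving the queue dynamics, once $\omega$ lies in a full-probability event $\Omega^*$ on which the realized graph sequence is ``deterministic and regular,'' the conclusions of Theorems~\ref{th:deterministic} and~\ref{th:tagged} can be invoked pathwise for that $\omega$ and yield the desired quenched convergences.

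The key step is therefore a concentration-plus-Borel--Cantelli argument for the degree sequence. Each $D_i^N$ is $\mathrm{Binomial}(N-1,p_N)$, so Bernstein's inequality yields, uniformly in $i\in [N]$ and $\varepsilon\in (0,1)$, a bound of the form
\[
\PR\!\Big(\big|D_i^N-(N-1)p_N\big|>\varepsilon(N-1)p_N\Big)\le 2\exp(-c\varepsilon^2 Np_N)
\]
for some absolute $c>0$. Under Condition~\ref{cond:errg2} one has $Np_N/\ln N\to\infty$, so there exists a sequence $\varepsilon_N\to 0$ (for instance $\varepsilon_N=(\ln N/(Np_N))^{1/3}$) with $c\varepsilon_N^2 Np_N\ge 2\ln N$ for all sufficiently large $N$. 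A union bound over $i\in [N]$ then gives
\[
\PR\!\Big(\max_{i\in [N]}\big|D_i^N-(N-1)p_N\big|>\varepsilon_N(N-1)p_N\Big)\le 2N\e^{-c\varepsilon_N^2 Np_N},
\]
and the right-hand side is summable in $N$. Borel--Cantelli then produces a full-probability event $\Omega^*$ on which $d_{\min}(G_N)/d_{\max}(G_N)\to 1$ and $d_{\min}(G_N)\ge (1-\varepsilon_N)(N-1)p_N\to\infty$. By Remark~\ref{rmk:weaker_condition}, both parts of Condition~\ref{cond-reg} hold on $\Omega^*$.

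With this in hand, for each fixed $\omega\in\Omega^*$ the sequence $\{G_N(\omega)\}_{N\ge 1}$ is a deterministic graph sequence satisfying Condition~\ref{cond-reg}, while the Poisson processes $\{\cN_i,\Nbar_i\}$ and the i.i.d.\ initial conditions $\{X_i^N(0)\}$ remain independent of the graph. Theorems~\ref{th:deterministic} and~\ref{th:tagged} then apply directly to this deterministic sequence, yielding, for $\omega\in\Omega^*$, the convergence (in the remaining queue-dynamics randomness) of the global occupancy process, the tagged-server queue-length processes, the propagation-of-chaos statement, and the neighborhood occupancy measures --- which is exactly the claimed quenched convergence.

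The main technical obstacle, and the precise reason for the logarithmic tightening from Condition~\ref{cond:errg1} to Condition~\ref{cond:errg2}, lies in the union bound above: to control the \emph{maximum} over $N$ degrees simultaneously and to ensure summability of the exceptional events in $N$, the Bernstein-type tail $\exp(-c\varepsilon_N^2 Np_N)$ must decay faster than $1/N$, which forces $Np_N$ to dominate $\ln N$. Under the weaker assumption $Np_N\to\infty$ one only obtains concentration for a typical vertex, which is adequate for the annealed statement of Theorem~\ref{thm:npn_rate} but insufficient for a pathwise verification of the uniform-in-$i$ degree regularity demanded by Condition~\ref{cond-reg}.
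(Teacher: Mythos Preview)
Your proposal is correct and follows essentially the same route as the paper: both reduce the quenched statement to Theorems~\ref{th:deterministic} and~\ref{th:tagged} by verifying Condition~\ref{cond-reg} almost surely via a Chernoff/Bernstein concentration bound on the degrees, a union bound over $i\in[N]$, and Borel--Cantelli, then invoking Remark~\ref{rmk:weaker_condition}. The only cosmetic differences are the choice of deviation scale (the paper takes $x(N)=\ln(N)(Np_N/\ln N)^{3/4}$, you take a relative error $\varepsilon_N=(\ln N/(Np_N))^{1/3}$) and the naming of the inequality; the analysis is otherwise identical.
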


\begin{remark}\label{rem:directed-random}
\normalfont
Consider the directed ERRG where any two distinct vertices $i,j$ have a directed edge from $i$ to $j$, independently of all other distinct pairs of vertices,  with probability $p_N$. Suppose conditions analogous 
to Conditions ~\ref{cond:errg1} and~\ref{cond:errg2} hold for this directed ERRG. Then, by a minor modification of the proofs, the conclusions of Theorems 
~\ref{thm:npn_rate} and~\ref{thm:npn_rate_quench} continue to hold. In fact some arguments get simpler since instead of the identity $\xi_{ij}^N = \xi_{ji}^N$ we have the independence of $\xi_{ij}^N$  and
$\xi_{ji}^N$. 
\end{remark}
\section{Proofs}\label{sec:proofs}

\subsection{Proofs for deterministic graph sequences}
An overview of the proof idea is as follows.
First note that the queue length process at any two vertices can be exactly coupled to evolve identically if the occupancy measure of the corresponding neighborhoods are indistinguishable.
The main step is to show that if the graph sequence satisfies  Condition~\ref{cond-reg}, then the local occupancy measure associated with the neighborhood of every server over any finite time interval converges to the same limit as for the global occupancy measure, which in turn is the same as that when the whole system uses the ordinary JSQ($d$) policy and the graph is a clique. 
This ensures that the rate of arrival (exogenous + forwarded from the neighboring vertices) to a typical server is (asymptotically) the same as that in the clique case. 
Thus, the law of the number of tasks at each server, and consequently the global occupancy measure, converge to the same limit.
For technical convenience we will provide the proof of Theorem~\ref{th:tagged} first, and then use that to establish Theorem~\ref{th:deterministic}.

We will define the limiting processes $(X_i(\cdot))_{i\geq 1}$ and the pre-limit processes $(X_i^N(\cdot))_{i\geq 1}$ on the same probability space by taking the same sequence of Poisson processes $\{\cN_i\}$ and Poisson random measures $\{\Nbar_i\}$ in both cases.
Also, take $X_i^N(0) = X_i(0)$ for all $i\in [N]$, $N\geq 1$.
Using Condition~\ref{cond-reg} we can find a $N_0 \in \Nmb$ such that for all $N\ge N_0$
\begin{equation}
	\label{eq:degree_large_N}
	d_{\min}(G_N) \ge d, \;  \sup_{i\in [N]} \left | \sum_{j\in [N], j\neq i} \frac{\xi_{ji}^N}{D_j} - 1\right| \le \frac{1}{2}, \sup_{i\in [N]} \sup_{t \in [0,T]} \left|C_i^N(t)\right| \le 2d.
\end{equation}
For the rest of this section we will assume that $N\ge N_0$ and therefore, in particular, the first and fourth terms in the definition of $C_i^N(s)$ are zero and the indicators in the second and third terms can be replaced by $1$.
We will frequently suppress $N$ in the notation $D^N_i$ and $\xi_{ij}^N$ and write them as $D_i$ and $\xi_{ij}$ respectively. We begin with the following lemma. Proof is given at the end of the subsection.
\begin{lemma}
	\label{lem:prep_quench}
	Let for $i \in [N]$ and $s\in [0,T]$
		\begin{align*}
			U_s  \doteq \E \left(\Big[  \sum_{(j_2,\dotsc,j_d) \in \Smc_i^N} \alpha^N(i; j_2, j_3, \ldots, j_d)    
			   \Big(b(X_i(s),X_{j_2}(s),\dotsc,X_{j_d}(s)) - \frac{C_i(s)}{d} \Big) \Big]^2 \right)
		\end{align*}
	and
		\begin{align*}	
		V_s  \doteq \E \left( \Big[ \sum_{(j_2,\dotsc,j_d) \in \Smc_i^N}  \alpha^N(j_2; i, j_3, \ldots, j_d)      \Big(b(X_i(s),X_{j_2}(s),\dotsc,X_{j_d}(s))-\frac{C_i(s)}{d}\Big) \Big]^2 \right).
		\end{align*}
Under the conditions of Theorem~\ref{th:deterministic}, there exists $K \in (0,\infty)$ such that for every $s \in [0,T]$ and  $i \in [N]$,
	\begin{align}
		U_s   \le \frac{K}{d_{\min}(G_N)} , \;\;
		V_s   \le \frac{K}{d_{\min}(G_N)} \left( \sum_{j=1,j \ne i}^N   \frac{\xi_{ji}}{D_j} \right)^2 . \label{eq:prep_quench_i}
	\end{align}
\end{lemma}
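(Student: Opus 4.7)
Plan: Both estimates will follow from a second-moment expansion that exploits, conditionally on $X_i(s)$, the vanishing of the centered kernel
\begin{equation*}
g_{(j)} \doteq b(X_i(s),X_{j_2}(s),\dotsc,X_{j_d}(s)) - \tfrac{1}{d}C_i(s), \qquad (j)\doteq(j_2,\dotsc,j_d)\in \Smc_i^N.
\end{equation*}
By the definition of $C_i(s)$ and because $\{X_j(\cdot)\}_{j\ne i}$ are iid with marginal $\mu_s$ at time $s$ and independent of $X_i(s)$, we have $\E[g_{(j)}\mid X_i(s)]=0$ and $|g_{(j)}|\le 1$. Expanding the square in each of $U_s$ and $V_s$ gives a double sum $\sum_{(j),(k)}A_{(j)}A_{(k)}\E[g_{(j)}g_{(k)}]$ for the appropriate weights $A$. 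Independence together with the centering will then force $\E[g_{(j)}g_{(k)}\mid X_i]=0$ whenever $\{j_2,\dotsc,j_d\}\cap\{k_2,\dotsc,k_d\}=\emptyset$, so only pairs of tuples that \emph{overlap} will contribute; on such pairs I plan to use only the trivial bound $|\E[g_{(j)}g_{(k)}]|\le 1$. The remaining task is then to bound the weight-adjusted overlap probability in each case.

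For $U_s$, I would observe that $\alpha^N(i;\cdot)$ is a uniform probability distribution on ordered $(d-1)$-tuples of distinct neighbors of $i$, so the marginal of each coordinate is uniform on the $D_i$-element set $N(i)$ and $P(j_a=k_b)\le 1/D_i$ for any fixed positions $a,b\in\{2,\dotsc,d\}$. A union bound over the $(d-1)^2$ position pairs then gives $U_s\le (d-1)^2/D_i\le K/d_{\min}(G_N)$.

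For $V_s$, the first step is to compute $\sum_{(j)}\alpha^N(j_2;i,j_3,\dotsc,j_d)=w_i\doteq \sum_{j\ne i}\xi_{ji}/D_j$ by first summing over $(j_3,\dotsc,j_d)$ for fixed $j_2$ (which gives $\xi_{j_2 i}/D_{j_2}$) and then over $j_2$. Consequently $\alpha^N(j_2;i,\cdot)/w_i$ is a probability law that samples $j_2$ with mass $\xi_{j_2 i}/(w_i D_{j_2})$ and then $(j_3,\dotsc,j_d)$ uniformly among $(d-2)$-tuples of distinct neighbors of $j_2$ other than $i$; pulling $w_i^2$ out of the square yields $V_s\le w_i^2\,P(\text{overlap})$. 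The key marginal estimate I would establish is that for $v\ne i$ and $a\ge 3$,
\begin{equation*}
P(j_a=v)=\frac{1}{w_i}\sum_{j_2}\frac{\xi_{j_2 i}\xi_{j_2 v}}{D_{j_2}(D_{j_2}-1)}\le \frac{1}{w_i(d_{\min}(G_N)-1)}\sum_{j_2}\frac{\xi_{j_2 i}}{D_{j_2}}=\frac{1}{d_{\min}(G_N)-1},
\end{equation*}
and analogously $P(j_2=v)\le 1/(w_i d_{\min}(G_N))$. Since $\sum_v P(j_a=v)P(k_b=v)\le \max_v P(k_b=v)\cdot \sum_v P(j_a=v)$, every coincidence $\{j_a=k_b\}$ has probability $O(1/d_{\min}(G_N))$ (using that $w_i\to 1$ under Condition~\ref{cond-reg}(ii)); summing the four overlap patterns ($j_2=k_2$; $j_a=k_2$ with $a\ge 3$; $j_2=k_b$ with $b\ge 3$; and $j_a=k_b$ with $a,b\ge 3$) gives $P(\text{overlap})\le K/d_{\min}(G_N)$, which after multiplication by $w_i^2$ produces the second estimate.

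The main technical hurdle will be the combinatorial decomposition of the overlap event in the $V_s$ argument and the uniform-in-$v$ marginal bound $P(j_a=v)\le 1/(d_{\min}(G_N)-1)$ for $a\ge 3$; once this is in hand, the $\max_v$ argument forces the $1/d_{\min}(G_N)$ decay of the coincidence probabilities despite the non-uniformity of the law of $j_2$, while the $w_i^2$ prefactor emerges naturally from $\alpha^N(j_2;i,\cdot)$ having total mass $w_i$.
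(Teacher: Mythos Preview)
Your proposal is correct and follows essentially the same strategy as the paper: expand the square, use the i.i.d.\ structure of $\{X_j(s)\}_j$ to kill cross terms whenever the two index tuples are disjoint, bound the surviving overlap terms by $1$, and then show the total overlap mass is $O(1/d_{\min})$. The only difference is in the execution of this last step: the paper counts overlapping tuples directly via differences of binomial-type products (e.g.\ $[(d-1)!\binom{D_i}{d-1}]^2-(2d-2)!\binom{D_i}{2d-2}\le \kappa D_i^{2d-3}$), whereas you phrase the same estimate probabilistically through marginal coincidence bounds and a union bound over positions. Your appeal to $w_i\to 1$ is needed because the $j_2=k_2$ overlap naturally yields a $w_i/d_{\min}$ contribution rather than $w_i^2/d_{\min}$; this is legitimate under the paper's standing assumption $N\ge N_0$, which guarantees $w_i\ge 1/2$.
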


\begin{proof}[Proof of Theorem~\ref{th:tagged}]
Fix any $i\in\N$ and $T>0$.
From \eqref{eq:X_i_n} and \eqref{eq:limit-tagged}, using Cauchy--Schwarz and Doob's inequalities we have for any fixed $t\in [0,T]$ and $N\ge i$,	
\begin{align}
	\E \left\|X_i^N - X_i\right\|_{*,t}^2 & \le \kappa_1 \E \int_0^t |\one_{\{X_i^N(s)>0\}} - \one_{\{X_i(s)>0\}}|^2 \, ds + \kappa_1 \E \left( \int_0^t |\one_{\{X_i^N(s)>0\}} - \one_{\{X_i(s)>0\}}| \, ds \right)^2 \notag \\
	& \quad + \kappa_1 \E \int_{[0,t]\times\Rmb_+} |\one_{\{0 \le y \le C_i^N(s)\}} - \one_{\{0 \le y \le C_i(s)\}}|^2 ds\,dy \notag \\
	& \quad + \kappa_1 \E \left(\int_{[0,t]\times\Rmb_+} |\one_{\{0 \le y \le C_i^N(s)\}} - \one_{\{0 \le y \le C_i(s)\}}| \,ds\,dy\right)^2 \notag \\
	& \le \kappa_1 \int_0^t \E |X_i^N(s) - X_i(s)|^2 \, ds + \kappa_1 \E \left( \int_0^t |X_i^N(s) - X_i(s)| \, ds \right)^2 \notag \\
	& \quad + \kappa_1 \int_0^t \E |C_i^N(s)-C_i(s)| \, ds + \kappa_1 \E \left( \int_0^t |C_i^N(s)-C_i(s)| \, ds \right)^2 \notag \\
	& \le \kappa_2 \int_0^t \E |X_i^N(s) - X_i(s)|^2 \, ds + \kappa_2 \int_0^t \E |C_i^N(s)-C_i(s)| \, ds \label{eq:npn_quench_1}
\end{align}
for some  $\kappa_1, \kappa_2 \in (0,\infty)$, where the last line uses \eqref{eq:degree_large_N} and the fact that $0 \le \frac{C_i(s)}{d} \le 1$.

Now we analyze the difference $|C_i^N(s)-C_i(s)|$ in \eqref{eq:npn_quench_1}. 
Note that by adding and subtracting terms we have
	\begin{equation}
		|C_i^N(s)-C_i(s)| \le |C_i^N(s)-C_i^{N,1}(s)| + |C_i^{N,1}(s)-C_i^{N,2}(s)| + |C_i^{N,2}(s)-C_i(s)|, \label{eq:npn_2}
	\end{equation}
	where
	\begin{align*}
			C_i^{N,1}(s) & =  \sum_{(j_2,\dotsc,j_d) \in \Smc_i^N} \alpha^N(i; j_2, j_3, \ldots, j_d) b(X_i(s),X_{j_2}(s),\dotsc,X_{j_d}(s)) \\
			& \quad + (d-1) \sum_{(j_2,\dotsc,j_d) \in \Smc_i^N}  \alpha^N(j_2; i, j_3, \ldots, j_d) b(X_i(s),X_{j_2}(s),\dotsc,X_{j_d}(s)) 
		\end{align*}
		and
		\begin{align*}
			C_i^{N,2}(s) & =   \sum_{(j_2,\dotsc,j_d) \in \Smc_i^N} \alpha^N(i; j_2, j_3, \ldots, j_d) \frac{C_i(s)}{d} 
			 + (d-1) \sum_{(j_2,\dotsc,j_d) \in \Smc_i^N}  \alpha^N(j_2; i, j_3, \ldots, j_d) \frac{C_i(s)}{d}.
		\end{align*}
We now analyze each term in \eqref{eq:npn_2}.
In particular,  we will use the Lipschitz property of $b$ to handle the term $|C^N_i-C^{N,1}_i|$, and then use the iid property of $X_i$'s to handle the term $|C^{N,1}_i-C^{N.2}_i|$. 

First consider $|C_i^N(s)-C_i^{N,1}(s)|$.
From the Lipschitz property of $b$ and the definition of $\alpha^N$ we have
	\begin{align*}
		\E |C_i^N(s)-C_i^{N,1}(s)|
		& \le \E \Big[  \sum_{(j_2,\dotsc,j_d) \in \Smc_i^N} \alpha^N(i; j_2, j_3, \ldots, j_d)  \\ 
		& \qquad (|X_i^N(s)-X_i(s)| + |X_{j_2}^N(s)-X_{j_2}(s)| + \dotsb + |X_{j_d}^N(s)-X_{j_d}(s)|) \\
		& \qquad + (d-1) \sum_{(j_2,\dotsc,j_d) \in \Smc_i^N}  \alpha^N(j_2; i, j_3, \ldots, j_d)\\
		& \qquad  (|X_i^N(s)-X_i(s)| + |X_{j_2}^N(s)-X_{j_2}(s)| + \dotsb + |X_{j_d}^N(s)-X_{j_d}(s)|) \Big], \\
		& \le  \max_{j \in [N]} \E |X_j^N(s)-X_j(s)|\Big(d   + (d-1)d  \sum_{j_2 \in [N], j_2 \ne i}  \frac{\xi_{j_2i}}{D_{j_2}}\Big).
	\end{align*}
From \eqref{eq:degree_large_N} we have
	\begin{equation}
		\label{eq:npn_quench_difference_1}
		\E |C_i^N(s)-C_i^{N,1}(s)| \le \kappa_3 \max_{j \in [N]} \E |X_j^N(s)-X_j(s)|
	\end{equation}
for some  $\kappa_3 \in (0,\infty)$.
Next we consider $|C_i^{N,1}(s)-C_i^{N,2}(s)|$.
It follows from Cauchy--Schwarz inequality  that
	\begin{align*}
		&\E |C_i^{N,1}(s)-C_i^{N,2}(s)|^2\\
		& \quad\le 2 \E \Big[  \sum_{(j_2,\dotsc,j_d) \in \Smc_i^N} \alpha^N(i; j_2, j_3, \ldots, j_d)   
		 \Big(b(X_i(s),X_{j_2}(s),\dotsc,X_{j_d}(s)) - \frac{C_i(s)}{d}\Big) \Big]^2  \\
		& \quad + 2(d-1)^2 \E \Big[ \sum_{(j_2,\dotsc,j_d) \in \Smc_i^N}  \alpha^N(j_2; i, j_3, \ldots, j_d)  
		  \Big(b(X_i(s),X_{j_2}(s),\dotsc,X_{j_d}(s))-\frac{C_i(s)}{d}\Big) \Big]^2   \\
		 &\quad\le \kappa_4(U_s + V_s).
	\end{align*}
where $U_s,V_s$ are as in Lemma \ref{lem:prep_quench}.
From Lemma~\ref{lem:prep_quench} and \eqref{eq:degree_large_N} we obtain
\begin{align}
	\label{eq:npn_quench_difference_2}
\big(\E |C_i^{N,1}(s)-C_i^{N,2}(s)|\big)^2&\leq \E |C_i^{N,1}(s)-C_i^{N,2}(s)|^2 \\
&\le \frac{\kappa_5}{d_{\min}(G_N)}  + \frac{\kappa_5 }{d_{\min}(G_N)} \left( \sum_{j=1,j \ne i}^N   \frac{\xi_{ji}}{D_j} \right)^2
\le \frac{\kappa_6}{d_{\min}(G_N)}.
\end{align}
Finally we consider $|C_i^{N,2}(s)-C_i(s)|$.
Using the fact that $0 \le \frac{C_i(s)}{d} \le 1$, we have
	\begin{equation}
		\label{eq:npn_quench_difference_3}
		\E |C_i^{N,2}(s)-C_i(s)|
		\le \E \left[ \frac{(d-1)C_i(s)}{d} \left| \sum_{j \in [N], j \ne i} \frac{\xi_{ji}}{D_j} -1 \right| \right] \le (d-1) \left| \sum_{j \in [N], j \ne i} \frac{\xi_{ji}}{D_j} -1 \right|.
	\end{equation}	
	Since $|X_i^N(s)-X_i(s)|$ is non-negative integer-valued, we have $|X_i^N(s)-X_i(s)| \le |X_i^N(s)-X_i(s)|^2$.
	Combining this and \eqref{eq:npn_quench_1} -- \eqref{eq:npn_quench_difference_3} yields
	\begin{align*}
		&\max_{i \in [N]} \E \left\|X_i^N - X_i\right\|_{*,t}^2 \\
		&\quad \le \kappa_7 \int_0^t \max_{i \in [N]} \E \left\|X_i^N - X_i\right\|_{*,s}^2 \, ds + \kappa_7 \left(\frac{1}{(d_{\min}(G_N))^{1/2}} + \max_{i \in [N]} \left| \sum_{j \in [N], j \ne i} \frac{\xi_{ji}}{D_j} -1 \right| \right).
	\end{align*}
	From Gronwall's lemma and Condition \ref{cond-reg} we have
	\begin{equation}
		\label{eq:added}
		\lim_{N \to \infty} \max_{i \in [N]} \E \left\|X_i^N - X_i\right\|_{*,T}^2 = 0,
	\end{equation}
	which gives Theorem~\ref{th:tagged} (i).

Given part (i), the proof of propagation of chaos property as stated in Theorem~\ref{th:tagged} (ii) follows from standard arguments (cf. \cite{Sznitman1991}), and hence is omitted.
Also, having established the asymptotic result in Theorem~\ref{th:tagged} (i), the proof of convergence of local occupancy measures as stated in Theorem~\ref{th:tagged} (iii) can be established using similar arguments as in \cite[Corollary 3.3]{BhamidiBudhirajaWu2016}.
\end{proof}
We now complete the proof of Theorem \ref{th:deterministic}.

\begin{proof}[Proof of Theorem~\ref{th:deterministic}]
	From the asymptotic result in \eqref{eq:added} it follows (cf.\ \cite{Sznitman1991} and \cite[Corollary3.3(b)]{BhamidiBudhirajaWu2016}) that $\qq^N(\cdot)$ converges weakly with respect to Skorohod $J_1$ topology to the deterministic limit $\tilde \qq(\cdot)$ given by $\tilde q_j(t) = \mu_t[j,\infty) = \Pro{X_i(t)\ge j}$ for all $j \in \Nmb_0$ and $t\ge 0$.
	However we provide a proof here for completeness.
	Fix $T < \infty$ and consider random measures
		$\mu^N \doteq \frac{1}{N} \sum_{i=1}^N \delta_{X_i^N(\cdot)}$ and $\bar\mu^N \doteq \frac{1}{N} \sum_{i=1}^N \delta_{X_i(\cdot)}$,
	on $\Smb \doteq \Dmb([0,T]:\Nmb_0)$.
	where $X_i$ is introduced in \eqref{eq:limit-tagged}.
	Denote by $d_{BL}(\cdot,\cdot)$ the bounded Lipschitz metric:
	\begin{equation*}
		d_{BL}(\nu_1,\nu_2) \doteq \sup_{\|f\|_{BL} \le 1} \left|\int_\Smb f d\nu_1 - \int_\Smb f d\nu_2\right|,
	\end{equation*}
	where $\|f\|_{BL} \doteq \max\{ \|f\|_\infty, \sup_{x \ne y} \frac{|f(x)-f(y)|}{d(x,y)}\}$.
	It suffices to show that $d_{BL}(\mu^N,\bar{\mu}^N) \to 0$ and $\bar{\mu}^N \to \mu$ in probability as $N \to \infty$.
	Note that
	\begin{align*}
		\Emb d_{BL}(\mu^N,\bar{\mu}^N) & = \Emb \sup_{\|f\|_{BL} \le 1} \left|\int_\Smb f d\mu^N - \int_\Smb f d\bar{\mu}^N\right| = \Emb \sup_{\|f\|_{BL} \le 1} \left|\frac{1}{N} \sum_{i=1}^N \left( f(X_i^N)-f(X_i) \right) \right| \\
		& \le \Emb \frac{1}{N} \sum_{i=1}^N \sup_{\|f\|_{BL} \le 1} |f(X_i^N)-f(X_i)| \le \frac{1}{N} \sum_{i=1}^N \Emb \left\|X_i^N - X_i\right\|_{*,T} \to 0
	\end{align*}
	as $N \to \infty$ by \eqref{eq:added}, and hence $d_{BL}(\mu^N,\bar{\mu}^N) \to 0$ in probability.
	Also note that for every bounded and continuous function $f$ on $\Smb$, from independence of $\{X_i\}$ we have
	\begin{align*}
		\Emb \left(\int_\Smb f d\bar{\mu}^N - \int_\Smb f d\mu\right)^2 & = \frac{1}{N^2} \sum_{i,j=1}^N Cov(f(X_i),f(X_j)) = \frac{1}{N^2} \sum_{i=1}^N Var(f(X_i)) \le \frac{\|f\|_\infty^2}{N} \to 0
	\end{align*}
	as $N \to \infty$, and hence $\bar{\mu}^N \to \mu$ in probability.
	Therefore $\mu^N \to \mu$ in probability as $N \to \infty$.
	One can easily check that $\sup_{N \ge 1} \Emb \sup_{0 \le t \le T} \|\qq^N(t)\|_{\ell_1}^2 < \infty$, which implies $\qq^N \to \tilde \qq$ as $N \to \infty$.

	Next, in order to prove the theorem it suffices to show that $\tilde \qq$ satisfies the system of ODE in \eqref{eq:occ-deterministic}.	
Define $f_j(x) = \one_{\{x\geq j\}}$, $j=1,2,\ldots$.
Then Equation~\eqref{eq:limit-tagged} yields
\begin{align*}
\E{f_j(X_i(t))} &= \E{f_j(X_i(0))}+\int_0^t \expt{\one_{\{X_i(s)>0\}}(f_j(X_i(s)-1)-f_j(X_i(s)))}\dif s\\
&\hspace{4cm}+ \lambda d \int_0^t \int_{\N^{d-1}}\E\Big[ b(X_i(s),x_2,\ldots,x_d)(f_j(X_i(s)+1)\\
&\hspace{6cm}-f_j(X_i(s)))\Big]\mu_s(\dif x_2)\ldots \mu_s(\dif x_d)\dif s\\
&=  \E{f_j(X_i(0))}-\int_0^t \expt{f_j(X_i(s))-f_{j+1}(X_i(s))}\dif s\\
&\hspace{4cm}+ \lambda d \int_0^t \int_{\N^{d-1}}\E \Big[b(j-1,x_2,\ldots,x_d)(f_{j-1}(X_i(s))\\
&\hspace{6cm}-f_j(X_i(s)))\Big]\mu_s(\dif x_2)\ldots \mu_s(\dif x_d)\dif s.
\end{align*}
Since $\E[f_j(X_i(t))]=\tilde q_j(t)$ for $j=1,2,\ldots$, we obtain
\begin{align}
\tilde q_j(t) &= \tilde q_j(0) - \int_0^t (\tilde q_j(s) - \tilde q_{j+1}(s))\dif s + \lambda d\int_0^t (\tilde q_{j-1}(s) - \tilde q_j(s)) \notag \\
&\hspace{5cm}\times\int_{\N^{d-1}} b(j-1,x_2,\ldots,x_d)\mu_s(\dif x_2)\ldots \mu_s(\dif x_d)\dif s. \label{eq:qtil_ODE}
\end{align}
Using \eqref{eq:b} and the fact that 
$\tilde q_j(t) = \mu_t[j,\infty) = \Pro{X_i(t)\ge j}$, $j=1,2,\ldots$, we have
\begin{align*}
	& d(\tilde q_{j-1}(s) - \tilde q_j(s)) \int_{\N^{d-1}} b(j-1,x_2,\ldots,x_d)\mu_s(\dif x_2)\ldots \mu_s(\dif x_d) \\
	& = d(\tilde q_{j-1}(s) - \tilde q_j(s)) \sum_{k=1}^d \frac{1}{k} \binom{d-1}{k-1} (\tilde q_{j-1}(s) - \tilde q_j(s))^{k-1} (\tilde q_j(s))^{d-k} \\
	& = \sum_{k=0}^d \binom{d}{k} (\tilde q_{j-1}(s) - \tilde q_j(s))^k (\tilde q_j(s))^{d-k} - (\tilde q_j(s))^d \\
	& = (\tilde q_{j-1}(s))^d - (\tilde q_j(s))^d.
\end{align*}
Therefore \eqref{eq:qtil_ODE} can be written as
\begin{equation*}
	\tilde q_j(t) = \tilde q_j(0) - \int_0^t (\tilde q_j(s) - \tilde q_{j+1}(s))\dif s + \lambda \int_0^t [(\tilde q_{j-1}(s))^d - (\tilde q_j(s))^d]\dif s.
\end{equation*}
This shows that $\tilde \qq$ satisfies the system of ODE in \eqref{eq:occ-deterministic} and completes the proof of Theorem~\ref{th:deterministic}.
\end{proof}

\begin{proof}[Proof of Lemma~\ref{lem:prep_quench}]
	We first show the first inequality in \eqref{eq:prep_quench_i}.
	Observe that
	\begin{align}
		U_s & = \sum_{(j_2,\dotsc,j_d) \in \Smc_i^N} \sum_{(k_2,\dotsc,k_d) \in \Smc_i^N} \left[  \alpha^N(i; j_2, j_3, \ldots, j_d)   \alpha^N(i; k_2, k_3, \ldots, k_d)  \right] \notag \\
		& \quad \E \left[ \left(b(X_i(s),X_{j_2}(s),\dotsc,X_{j_d}(s)) - \frac{C_i(s)}{d} \right) \left(b(X_i(s),X_{k_2}(s),\dotsc,X_{k_d}(s)) - \frac{C_i(s)}{d} \right) \right]. \notag
	\end{align}
Now observe that since $\{X_i(0):i\in [N]\}$ are iid, we have $\{X_i(s):i\in [N]\}$ are also iid for any fixed $s>0$.
Thus,
	\begin{equation}
		\label{eq:prep_quench_simplify}
		\E \left[ \left(b(X_i(s),X_{j_2}(s),\dotsc,X_{j_d}(s)) - \frac{C_i(s)}{d} \right) \left(b(X_i(s),X_{k_2}(s),\dotsc,X_{k_d}(s)) - \frac{C_i(s)}{d} \right) \right]=0
	\end{equation}
	when $(i,j_2,k_2,\dotsc,j_d,k_d)$ are distinct.
Therefore, we have
	\begin{align}
		U_s \le \sum  \alpha^N(i; j_2, j_3, \ldots, j_d) \alpha^N(i; k_2, k_3, \ldots, k_d), \label{eq:prep_quench_2}
	\end{align}
	where the summation is taken over
	\begin{equation}
		\label{eq:prep_quench_3}
		\hat \Smc_i^N \doteq \left\{(j_2,\dotsc,j_d) \in \Smc_i^N, (k_2,\dotsc,k_d) \in \Smc_i^N, (j_2,k_2,\dotsc,j_d,k_d) \mbox{ are not distinct}\right\}
	\end{equation}
	and the inequality follows since $0 \le b \le 1$ and $0 \le \frac{C_i(s)}{d} \le 1$.
	Since the total number of combinations in \eqref{eq:prep_quench_3} such that $(\xi_{ij_2}\xi_{ij_3}\dotsm\xi_{ij_d})(\xi_{ik_2}\xi_{ik_3}\dotsm\xi_{ik_d})=1$ is no more than
	\begin{equation}\label{eq:bdonbindiff}
		\left[(d-1)! \binom{D_i}{d-1}\right]^2 - (2d-2)!\binom{D_i}{2d-2} \le \kappa_1 D_i^{2d-3}, 
	\end{equation}
	we can bound \eqref{eq:prep_quench_2} by
	\begin{align*}
	\frac{\kappa_1 D_i^{2d-3}}{D_i^2(D_i-1)^2\dotsm(D_i-d+2)^2} \le \kappa_2  \frac{1}{D_i} \le \frac{\kappa_2}{d_{\min}(G_N)}.
	\end{align*}
	This gives the first bound in \eqref{eq:prep_quench_i}.
	
	Next we show the second bound in \eqref{eq:prep_quench_i}.
	From  \eqref{eq:prep_quench_simplify} it follows from the same argument used for \eqref{eq:prep_quench_2} that 
	\begin{equation}
		V_s \le \sum  \alpha^N(j_2; i, j_3, \ldots, j_d) \alpha^N(k_2; i, k_3, \ldots, k_d), \label{eq:prep_quench_4}
	\end{equation}
	where the summation is taken over \eqref{eq:prep_quench_3}.
	Since for fixed $(j_2,k_2) \in \Smcbar_i$, where
	\begin{equation}\label{eq:smcbar}
	\Smcbar_i\doteq \{ (j,k) \in [N]^2 : j \ne i, k \ne i\},
	\end{equation}
	the total number of combinations in \eqref{eq:prep_quench_3} such that $(\xi_{j_2i}\xi_{j_2j_3}\dotsm\xi_{j_2j_d})(\xi_{k_2i}\xi_{k_2k_3}\dotsm\xi_{k_2k_d})=1$ is no more than
	\begin{align}
		& \left[(d-2)! \binom{D_{j_2}-1}{d-2}\right] \left[ (d-2)! \binom{D_{k_2}-1}{d-2} \right] - \left[(d-2)!\binom{D_{j_2}-2}{d-2}\right] \left[(d-2)!\binom{D_{k_2}-d}{d-2}\right] \nonumber\\
		& \le \kappa_3 (D_{j_2}^{d-3}D_{k_2}^{d-2} + D_{j_2}^{d-2}D_{k_2}^{d-3}), \label{eq:d2d3bd}
	\end{align}
	where the second term in the first line corresponds to choosing distinct $j_3,\dotsc,j_d$ from $D_{j_2}-2$ neighbors (excluding $i,k_2$) of $j_2$ and then choosing distinct $k_3,\dotsc,k_d$ from $D_{k_2}-d$ neighbors (excluding $i,j_2,\dotsc,j_d$) of $k_2$.
	Now, we can bound \eqref{eq:prep_quench_4} by
	\begin{align*}
		 &\sum_{(j_2,k_2) \in \Smcbar_i}  \frac{\kappa_3 (D_{j_2}^{d-3}D_{k_2}^{d-2} + D_{j_2}^{d-2}D_{k_2}^{d-3})\xi_{j_2i}\xi_{k_2i}}{D_{j_2}(D_{j_2}-1)\dotsm(D_{j_2}-d+2)D_{k_2}(D_{k_2}-1)\dotsm(D_{k_2}-d+2)}\\
		& \le \kappa_4 \sum_{(j_2,k_2) \in \Smcbar_i}  \left( \frac{\xi_{j_2i}\xi_{k_2i}}{D_{j_2}^2D_{k_2}} + \frac{\xi_{j_2i}\xi_{k_2i}}{D_{j_2}D_{k_2}^2} \right) \\
		& \le \kappa_4 \frac{2}{d_{\min}(G_N)} \left( \sum_{j=1,j \ne i}^N   \frac{\xi_{ji}}{D_j} \right)^2.
	\end{align*}
	This completes the proof.
\end{proof}

\subsection{Proofs for random graph sequences}
In this section we give the proofs of Theorems \ref{thm:npn_rate} and \ref{thm:npn_rate_quench}.
As in the proof of Theorem~\ref{th:tagged}, we will define the limiting processes $(X_i(\cdot))_{i\geq 1}$ and the pre-limit processes $(X_i^N(\cdot))_{i\geq 1}$ on the same probability space by taking identical sequence of Poisson processes $\{N_i\}$ and Poisson random measures $\{\Nbar_i\}$ in both cases. The random graph sequence $\{G_N\}$ will also be given on this common probability space and is taken to be independent of the Poisson processes and Poisson random measures.
Finally, we take $X_i^N(0) = X_i(0)$ for all $i\in [N]$, $N\geq 1$.
Once again, we will frequently suppress $N$ in the notation $D^N_i$ and write it as $D_i$.
We begin with three lemmas that will be used in the proof.
Let for $s\ge 0$
	\begin{align}
		U_s^A & \doteq \Ebf\left( \Big[ \one_{\{D^N_i \ge d-1\}} \sum_{(j_2,\dotsc,j_d) \in \Smc_i^N} \alpha^N(i; j_2, j_3, \ldots, j_d)   \Big(b(X_i(s),X_{j_2}(s),\dotsc,X_{j_d}(s)) - \frac{C_i(s)}{d} \Big) \Big]^2 \right)
		\label{eq:prep_new_i} 
	\end{align}
and
	\begin{align}
		V_s^A & \doteq \Ebf\left( \Big[ \sum_{(j_2,\dotsc,j_d) \in \Smc_i^N} \one_{\{D^N_{j_2} \ge d-1\}} \alpha^N(j_2; i, j_3, \ldots, j_d) \Big(b(X_i(s),X_{j_2}(s),\dotsc,X_{j_d}(s))-\frac{C_i(s)}{d}\Big) \Big]^2\right). 
		\label{eq:prep_new_j}
	\end{align}	
	Note that the dependence of $U_s^A$ and $V_s^A$ on $i$ is suppressed in the notation.
The next lemma provides uniform bounds on $U_s^A$ and $V_s^A$.
\begin{lemma}
	\label{lem:prep_new}
	Fix $T\ge 0$. Under the conditions of Theorem~\ref{thm:npn_rate}, there exists $\kappa \in (0,\infty)$ such that for every $s \in [0,T]$ and  $i \in [N]$,
	\begin{align}
		U_s^A  \le \frac{\kappa}{Np_N} \qquad\mbox{and}\qquad 
		V_s^A \le \frac{\kappa}{Np_N} + \frac{\kappa}{(Np_N)^2}.\notag 
	\end{align}
\end{lemma}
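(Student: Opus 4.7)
The strategy is to adapt the proof of Lemma~\ref{lem:prep_quench} to the random graph setting by taking an additional expectation over $G_N$ at the end. By construction $\{X_i(\cdot)\}_{i\ge 1}$ are iid and independent of $G_N$, so the mean-zero identity
$$\Ebf\bigl[b(X_i(s),X_{j_2}(s),\dotsc,X_{j_d}(s)) - C_i(s)/d \,\big|\, X_i(s), G_N\bigr] = 0$$
still holds. Conditioning on $G_N$ and expanding the squares in the definitions of $U_s^A$ and $V_s^A$, only pairs of tuples $(j_2,\dotsc,j_d)$ and $(k_2,\dotsc,k_d)$ sharing at least one index contribute (as in~\eqref{eq:prep_quench_simplify}), and each surviving cross-expectation is bounded by a universal constant using $|b|,|C_i/d|\le 1$. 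Hence it suffices to bound the $G_N$-expectation of the same deterministic sums handled in Lemma~\ref{lem:prep_quench}, now with $D_i,D_{j_2},D_{k_2}$ treated as random variables.

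For $U_s^A$, the pathwise counting~\eqref{eq:bdonbindiff} combined with the manipulations that follow~\eqref{eq:prep_quench_2} yields
$$U_s^A \le \kappa_1\, \Ebf\!\left[\frac{\one_{\{D_i\ge d-1\}}}{D_i}\right].$$
The crucial technical input is then the moment estimate $\Ebf[\one_{\{D_i\ge d-1\}}/D_i]\le \kappa_2/(Np_N)$. I would prove this by splitting on $\{D_i\ge Np_N/2\}$, where $1/D_i\le 2/(Np_N)$, and on its complement, whose probability is exponentially small by a Chernoff bound for $D_i\sim\mathrm{Bin}(N-1,p_N)$ (valid since $Np_N\to\infty$), while $1/D_i\le 1/(d-1)$ on $\{D_i\ge d-1\}$. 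This immediately yields $U_s^A\le \kappa/(Np_N)$.

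For $V_s^A$, the same reduction combined with the pathwise bound~\eqref{eq:d2d3bd} gives
$$V_s^A \le \kappa_3\, \Ebf\!\left[\sum_{(j_2,k_2)\in\Smcbar_i} \xi_{j_2 i}\xi_{k_2 i}\Bigl(\frac{1}{D_{j_2}^2 D_{k_2}} + \frac{1}{D_{j_2} D_{k_2}^2}\Bigr) \one_{\{D_{j_2},D_{k_2}\ge d-1\}}\right].$$
I would then split the sum into the diagonal ($j_2=k_2$) and off-diagonal ($j_2\ne k_2$) contributions. For the diagonal piece each summand reduces to $\Ebf[\xi_{ji}/D_j^3\,\one_{\{D_j\ge d-1\}}]$; conditioning on $\xi_{ji}=1$ leaves $D_j = 1 + \mathrm{Bin}(N-2,p_N)$, and the Chernoff argument above gives a per-$j$ bound of order $p_N/(Np_N)^3$, summing to $O(1/(Np_N)^2)$ over $j\ne i$. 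For the off-diagonal piece, $D_{j_2}$ and $D_{k_2}$ share only the single edge $\xi_{j_2 k_2}$; conditioning on $\xi_{j_2 i}=\xi_{k_2 i}=1$ and exploiting the near-independence of the remaining binomial parts yields a per-pair bound of order $p_N^2/(Np_N)^3$, summing to $O(1/(Np_N))$ over the $O(N^2)$ pairs. Combining, $V_s^A\le \kappa/(Np_N) + \kappa/(Np_N)^2$.

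The main obstacle is the careful justification of the negative-moment bounds $\Ebf[\one_{\{D\ge d-1\}}/D^k]\lesssim (Np_N)^{-k}$ and their joint analogues after conditioning on a bounded number of edges being present, together with handling the mild correlation between $D_{j_2}$ and $D_{k_2}$ in the off-diagonal case. Once those bounds are in hand, the counting from the proof of Lemma~\ref{lem:prep_quench} transfers verbatim, and the additional $p_N$ factors arising from $\Ebf[\xi]=p_N$ combine with the $(Np_N)^{-k}$ moment estimates to produce the claimed $1/(Np_N)$ scaling.
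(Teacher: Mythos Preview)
Your strategy is correct and follows the same architecture as the paper's proof: separate the expectation over $\{X_i\}$ from that over $G_N$ using independence, invoke the mean-zero identity so that only tuples sharing at least one index survive, apply the pathwise counting bounds \eqref{eq:bdonbindiff} and \eqref{eq:d2d3bd}, and then control the resulting expectations over $G_N$.

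The differences are purely in the technical tools used for the last step. Where you propose Chernoff bounds to control $\Ebf[\one_{\{D_i\ge d-1\}}/D_i^k]$, the paper invokes a clean closed-form inequality (Lemma~\ref{lem:prep_1}): for $X\sim\mathrm{Bin}(N,p)$, $\Ebf[\one_{\{X>0\}}/(2X)^m]\le m^m/((N+1)p)^m$. This gives the negative-moment bounds in one line without case-splitting. For the diagonal part of $V_s^A$, the paper exploits exchangeability to rewrite $\sum_{j\ne i}\Ebf[\one_{\{D_j\ge d-1\}}\xi_{ji}/D_j^3]=\Ebf[\one_{\{D_i\ge d-1\}}/D_i^2]$, avoiding the conditioning on $\xi_{ji}=1$ that you outline. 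For the off-diagonal part, the paper conditions on $\xi_{jk}$ (the edge linking $j_2$ and $k_2$) to decouple $D_{j_2}$ and $D_{k_2}$, whereas you condition on $\xi_{j_2i}=\xi_{k_2i}=1$; both routes work and yield the same $p_N^2/(Np_N)^3$ per-pair rate. Your Chernoff approach is slightly more elementary but requires tracking an additional exponentially-small error term; the paper's binomial moment lemma is sharper and more reusable, and the exchangeability trick for the diagonal is a nice shortcut worth noting.
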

\noindent
The proof of Lemma~\ref{lem:prep_new} follows along similar lines  as the proof of Lemma~\ref{lem:prep_quench}, however note that the expectations in \eqref{eq:prep_new_i} and \eqref{eq:prep_new_j} are taken also over the randomness of the graph topology, and thus we need additional arguments. 
Proof of Lemma~\ref{lem:prep_new} is provided at the end of this subsection.

The next lemma is taken from \cite{BhamidiBudhirajaWu2016}.
\begin{lemma}[{\cite[Lemma~5.2]{BhamidiBudhirajaWu2016}}]
	\label{lem:prep_2}
	Let $G_N$ be an ERRG with connection probability $p_N$. Then
	\begin{equation*}
		\Ebf\ \Big( \sum_{j \in [N], j \ne i} \frac{\xi_{ij}^N}{D_j^N} \one_{\{D_j^N > 0\}} - 1 \Big)^2 \le \frac{4}{N p_N} + 2 e^{-N p_N}, \quad i \in [N],
	\end{equation*}
\end{lemma}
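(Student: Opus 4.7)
The plan is to decompose $T_i \doteq \sum_{j \ne i} \frac{\xi_{ij}^N}{D_j^N} \one_{\{D_j^N > 0\}}$ via $\Ebf(T_i-1)^2 = \mathrm{Var}(T_i) + (\Ebf T_i - 1)^2$ and bound each summand using closed-form moments of the reciprocal of a Binomial.  Note first that $\xi_{ij}^N = 1$ forces $D_j^N \ge 1$, so the indicator is automatically satisfied on the support of the summand.  Conditioning on $\xi_{ij}^N = 1$, one has $D_j^N = 1 + B$ with $B \sim \mathrm{Bin}(N-2, p_N)$, and the elementary identity
\[
\Ebf\!\left[\frac{1}{1+\mathrm{Bin}(n,p)}\right] = \frac{1-(1-p)^{n+1}}{(n+1)p},
\]
proved from $\binom{n}{k}/(k+1) = \binom{n+1}{k+1}/(n+1)$, gives $\Ebf T_i = 1 - (1-p_N)^{N-1}$.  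Thus $(\Ebf T_i - 1)^2 = (1-p_N)^{2(N-1)} \le 2 e^{-Np_N}$ for $N \ge 2$, accounting for the exponential term in the bound.

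For the variance, I would use the law of total variance with the $\sigma$-algebra $\mathscr{G}_i \doteq \sigma(\xi_{ij}^N : j \ne i)$ generated by the edges incident to $i$.  Conditioning on $\mathscr{G}_i$ fixes the neighbourhood $\mathscr{N}_i \doteq \{j : \xi_{ij}^N = 1\}$ with cardinality $B_i^N \sim \mathrm{Bin}(N-1, p_N)$; for each $j \in \mathscr{N}_i$ the remaining edges $(\xi_{jk}^N)_{k \notin \{i,j\}}$ are independent $\mathrm{Bernoulli}(p_N)$, so writing $c_N \doteq [1-(1-p_N)^{N-1}]/[(N-1)p_N]$ one obtains $\Ebf[T_i \mid \mathscr{G}_i] = c_N B_i^N$ and hence
\[
\mathrm{Var}\bigl(\Ebf[T_i \mid \mathscr{G}_i]\bigr) = c_N^2 (N-1)p_N(1-p_N) \le \frac{1}{(N-1)p_N}.
\]

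For $\Ebf\mathrm{Var}(T_i \mid \mathscr{G}_i)$, I would expand as a sum over $j \in \mathscr{N}_i$ of $\mathrm{Var}(1/D_j^N \mid \mathscr{G}_i)$ plus covariances over $j \ne k$ in $\mathscr{N}_i$.  The second-moment analogue of the identity above bounds each diagonal term by $O((Np_N)^{-2})$, so summing and taking expectation gives an aggregate of $O((Np_N)^{-1})$.  For the off-diagonal terms, the key observation is that given $\mathscr{G}_i$ the dependence between $D_j^N$ and $D_k^N$ is carried only by the single edge $\xi_{jk}^N$: decomposing $D_j^N = 1 + \xi_{jk}^N + A_j^N$ and $D_k^N = 1 + \xi_{jk}^N + A_k^N$ with $A_j^N, A_k^N$ independent of each other and of $\xi_{jk}^N$, the conditional covariance equals $\mathrm{Var}(f(\xi_{jk}^N))$ for an appropriate scalar function $f$, and a direct estimate yields $O(p_N (Np_N)^{-4})$.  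Summing over at most $(B_i^N)^2$ pairs contributes only $O((N^2 p_N)^{-1})$ in expectation, which is dominated by the diagonal contribution.

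The hard part will be extracting the explicit constants $4$ and $2$ in the stated bound.  Rough asymptotic estimates suffice to obtain the correct order of magnitude, but to match the precise prefactors the plan is to retain the closed-form expression for $c_N$ and its second-moment analogue throughout rather than substituting their leading-order approximations, and to sharpen the inequality $(1-p_N)^{2(N-1)} \le 2 e^{-Np_N}$ by treating small and moderate values of $p_N$ separately.
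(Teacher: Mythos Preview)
The paper does not prove this lemma at all; it is quoted verbatim from \cite{BhamidiBudhirajaWu2016} (Lemma~5.2 there) and invoked as a black box. There is therefore no in-paper argument to compare your proposal against.

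Your approach is structurally correct and would yield a bound of the right form $C/(Np_N) + C'e^{-Np_N}$. The bias computation $\Ebf T_i = 1 - (1-p_N)^{N-1}$ is exact, the law-of-total-variance decomposition with respect to $\mathscr{G}_i = \sigma(\xi_{ij}^N : j \ne i)$ is sound, and your handling of the off-diagonal covariance via $\mathrm{Var}(g(\xi_{jk}^N))$ with $g(x) = \Ebf[1/(1+x+\mathrm{Bin}(N-3,p_N))]$ is correct and does give a contribution of lower order $O((N^2p_N)^{-1})$.

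Your caveat about the explicit constants is well placed: with the crude bound $\Ebf[1/(1+B)^2] \le 4/((N-1)^2p_N^2)$ from Lemma~\ref{lem:prep_1}, the diagonal piece alone already contributes $4/((N-1)p_N)$, and together with $\mathrm{Var}(\Ebf[T_i\mid\mathscr{G}_i]) \le 1/((N-1)p_N)$ you exceed $4/(Np_N)$. Sharpening to the stated constant $4$ would require either tighter second-moment identities or a different grouping of terms. However, for every application of this lemma in the present paper the constants are irrelevant (only the order $(Np_N)^{-1}$ matters), so a version with unspecified absolute constants would serve equally well.
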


The following lemma provides useful moment bounds on $|X_i^N - X_i|$ and its proof is given at the end of this subsection.
\begin{lemma}
	\label{lem:momentbd}
	Fix $T\ge 0$. Under the conditions of Theorem~\ref{thm:npn_rate},
	\begin{equation*} 
		\sup_{N \ge 1} \max_{i \in [N]} \Ebf \left\|X_i^N - X_i\right\|_{*,T}^4 < \infty.
	\end{equation*}
\end{lemma}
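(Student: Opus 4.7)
The plan is to exploit the coupling of $X_i^N$ and $X_i$ via the common Poisson processes $\cN_i$ and Poisson random measures $\Nbar_i$ (with $X_i^N(0)=X_i(0)$) to bound the difference by a counting process whose rate is governed by $|C_i^N - C_i|$. From the representations in \eqref{eq:X_i_n} and \eqref{eq:limit-tagged},
\begin{align*}
X_i^N(t) - X_i(t) = -\int_0^t \bigl[\one_{\{X_i^N(s-)>0\}} - \one_{\{X_i(s-)>0\}}\bigr]\cN_i(ds) + \int_{[0,t]\times\Rmb_+} \bigl[\one_{\{0\le y\le C_i^N(s-)\}} - \one_{\{0\le y\le C_i(s-)\}}\bigr]\Nbar_i(ds\,dy),
\end{align*}
and the triangle inequality yields $\|X_i^N - X_i\|_{*,T} \le \cN_i(T) + L_i^N(T)$, where $L_i^N$ is the counting process of ``mismatched arrivals'' (marks of $\Nbar_i$ falling in the interval $[\min(C_i^N,C_i),\max(C_i^N,C_i)]$), with compensator $A_i^N(t) \doteq \lambda\int_0^t |C_i^N(s) - C_i(s)|\,ds$.

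Since $\Ebf\cN_i(T)^4$ is a fixed constant, it suffices to control $\Ebf L_i^N(T)^4$. Writing $L_i^N = M_i^N + A_i^N$ with $M_i^N$ a purely discontinuous martingale whose jumps are bounded by $1$ so that $[M_i^N,M_i^N]_T = L_i^N(T)$, two applications of the Burkholder--Davis--Gundy inequality (first for $p=4$, then $p=2$), combined with $\Ebf L_i^N(T) = \Ebf A_i^N(T)$, give $\Ebf L_i^N(T)^4 \le \kappa\bigl(1 + \Ebf A_i^N(T)^4\bigr)$. Since $|C_i^N - C_i| \le C_i^N + d$, an application of Jensen's inequality reduces the proof to establishing
\begin{align*}
	\sup_{N \ge 1,\; i \in [N],\; s \in [0,T]} \Ebf C_i^N(s)^4 < \infty.
\end{align*}

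For this last bound I would use the decomposition of $C_i^N$ into the four terms in \eqref{eq:cn}: the first two are deterministically bounded by $d-1$ and $1$ respectively. For the third term, $T_3 \le (d-1)\sum_{j\ne i}\xi_{ji}/(D_j\vee 1)$, a direct expansion of the fourth moment produces $\sum_{j_1,\ldots,j_4}\Ebf\prod_k \xi_{j_k i}/D_{j_k}$; exploiting the near-independence of $\xi_{j_k i}$ and $D_{j_k}$ across distinct indices in the ERRG, together with the estimate $\Ebf[1/D_j\mid \xi_{ji}=1] = O(1/(Np_N))$ (the same input that drives Lemma~\ref{lem:prep_2}), the dominant contribution (all $j_k$ distinct) is $O(1)$ and the coincidence corrections are $O(1/(Np_N))$. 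The fourth term $T_4$ vanishes on $\{d_{\min}(G_N)\ge d-1\}$, so Cauchy--Schwarz gives $\Ebf T_4^4 \le \sqrt{\Ebf(D_i+1)^8}\sqrt{\Ebf Y_i^8}$, where $Y_i$ counts the neighbors of $i$ of degree less than $d-1$. A combinatorial moment computation yields $\Ebf Y_i^k = O((Np_N)^{k+d-3}e^{-Np_N})$, and combining this with $\Ebf D_i^k = O((Np_N)^k)$ produces a bound of the form $(Np_N)^C e^{-Np_N/2}$, which is uniformly finite as $Np_N\to\infty$ because $x\mapsto x^C e^{-x/2}$ is bounded on $(0,\infty)$.

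The main obstacle is the last step. The $T_3$ contribution is essentially a fourth-moment analogue of Lemma~\ref{lem:prep_2}, but the genuine difficulty is bounding $T_4$ on the low-probability event $\{d_{\min}(G_N) < d-1\}$, which need not have probability going to zero when $Np_N$ grows slowly (e.g.\ for $Np_N = o(\log N)$). On this event $C_i^N$ is not a priori bounded by a constant, and only the Poisson-type exponential decay $e^{-Np_N}$ of the probability that a given neighbor has anomalously low degree can absorb the polynomial moments of $D_i$; this is precisely what makes the uniform bound hold for every $Np_N\to\infty$.
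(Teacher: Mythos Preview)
Your proposal is correct. Both your argument and the paper's reduce the problem to the key estimate $\sup_{N,i,s}\Ebf\, C_i^N(s)^4 < \infty$, and both then bound the four terms of $C_i^N$ separately. The routes differ at two levels. First, the paper reaches the reduction via a Gronwall argument: from Doob and Cauchy--Schwarz it derives
\[
\Ebf\|X_i^N-X_i\|_{*,t}^4 \le \kappa_1\int_0^t \Ebf|X_i^N(s)-X_i(s)|^4\,ds + \kappa_1\int_0^t\bigl(\Ebf|C_i^N(s)-C_i(s)|^2+\Ebf|C_i^N(s)-C_i(s)|^4\bigr)\,ds
\]
and closes with Gronwall, whereas you short-circuit this with the pathwise domination $\|X_i^N-X_i\|_{*,T}\le \cN_i(T)+L_i^N(T)$ and BDG on the counting process $L_i^N$; your route is a bit more direct and avoids the self-referential integral inequality. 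Second, for the fourth moment of the third and fourth terms of $C_i^N$, the paper uses the H\"older trick $\bigl(\sum_j a_j b_j\bigr)^4 \le \bigl(\sum_j a_j\bigr)^3\sum_j a_j b_j^4$ with $a_j=\xi_{ij}/D_i$ a probability weight, then conditions on a single edge and invokes exchangeability, while you use a direct four-index expansion for $T_3$ and Cauchy--Schwarz to decouple $D_i$ from $Y_i$ for $T_4$. The paper's device is cleaner and sidesteps the combinatorial moment computation for $Y_i$; your decoupling still works because the exponential factor $e^{-Np_N}$ hidden in $\Ebf Y_i^8$ absorbs the polynomial $(Np_N)^4$ coming from $\sqrt{\Ebf(D_i+1)^8}$, exactly as you note in your final paragraph.
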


We now present the proof of Theorem \ref{thm:npn_rate}.

\begin{proof}[Proof of Theorem \ref{thm:npn_rate}]

Fix any $i\in\N$ and $T>0$.
From \eqref{eq:X_i_n} and \eqref{eq:limit-tagged}, using Cauchy--Schwarz and Doob's inequalities we have for any fixed $t\in [0,T]$	
\begin{align}
	\Ebf \left\|X_i^N - X_i\right\|_{*,t}^2 & \le 
	\kappa_1 \int_0^t \Ebf |X_i^N(s) - X_i(s)|^2 \, ds + \kappa_1 \int_0^t \Ebf |C_i^N(s)-C_i(s)| \, ds + \kappa_1 \int_0^t \Ebf |C_i^N(s)-C_i(s)|^2 \, ds \label{eq:npn_1}
\end{align}
for some  $\kappa_1\in (0,\infty)$.
Define $C^{N,1}_i(s)$ and $C^{N,2}_i(s)$ by
\begin{align*}
	C_i^{N,1}(s) & = \one_{\{D_i<d-1\}} \bbar_i((X_k^N(s))_{k \in [N]},(\xi_{kl})_{k,l \in [N]}) \\
	& \quad + \one_{\{D_i \ge d-1\}} \sum_{(j_2,\dotsc,j_d) \in \Smc_i^N} \alpha^N(i; j_2, j_3, \ldots, j_d) b(X_i(s),X_{j_2}(s),\dotsc,X_{j_d}(s)) \\
	& \quad + (d-1) \sum_{(j_2,\dotsc,j_d) \in \Smc_i^N} \one_{\{D_{j_2} \ge d-1\}} \alpha^N(j_2; i, j_3, \ldots, j_d) b(X_i(s),X_{j_2}(s),\dotsc,X_{j_d}(s)) \\
	& \quad + \sum_{j_2 \in [N], j_2 \ne i} \one_{\{D_{j_2<d-1}\}}\xi_{ij_2} \bbar_{ij_2}((X_k^N(s))_{k \in [N]},(\xi_{kl})_{k,l \in [N]})
\end{align*}
and
\begin{align*}
	C_i^{N,2}(s) & = \one_{\{D_i<d-1\}} \bbar_i((X_k^N(s))_{k \in [N]},(\xi_{kl})_{k,l \in [N]}) \\
	& \quad + \one_{\{D_i \ge d-1\}} \sum_{(j_2,\dotsc,j_d) \in \Smc_i^N} \alpha^N(i; j_2, j_3, \ldots, j_d) \frac{C_i(s)}{d} \\
	& \quad + (d-1) \sum_{(j_2,\dotsc,j_d) \in \Smc_i^N} \one_{\{D_{j_2} \ge d-1\}} \alpha^N(j_2; i, j_3, \ldots, j_d) \frac{C_i(s)}{d} \\
	& \quad + \sum_{j_2 \in [N], j_2 \ne i} \one_{\{D_{j_2<d-1}\}}\xi_{ij_2} \bbar_{ij_2}((X_k^N(s))_{k \in [N]},(\xi_{kl})_{k,l \in [N]}).
\end{align*}
By adding and subtracting terms we have ~\eqref{eq:npn_2} and
\begin{equation}
	|C_i^N(s)-C_i(s)|^2 \le 3|C_i^N(s)-C_i^{N,1}(s)|^2 + 3|C_i^{N,1}(s)-C_i^{N,2}(s)|^2 + 3|C_i^{N,2}(s)-C_i(s)|^2. \label{eq:npn_3}
\end{equation}
Here although one has $\Ebf |C_i^N(s)-C_i(s)| \le \left( \Ebf |C_i^N(s)-C_i(s)|^2 \right)^{1/2}$, in order to get the desired rate $\sqrt{Np_N}$ in \eqref{eq:npn_rate}, we have to estimate $\Ebf |C_i^N(s)-C_i(s)|$ more carefully through \eqref{eq:npn_2}.

Let us consider $|C_i^N(s)-C_i^{N,1}(s)|$ and $|C_i^N(s)-C_i^{N,1}(s)|^2$ first.
We claim that for $m=1,2$, there exists some $\kappa_2 \in (0,\infty)$ such that
\begin{align}
	\Ebf |C_i^N(s)-C_i^{N,1}(s)|^m & \le \kappa_2 \Ebf |X_i^N(s)-X_i(s)|^m + \kappa_2 \Ebf \Big[ \one_{\{D_i \ge d-1\}} \sum_{j \in [N], j \ne i} \frac{\xi_{ij}}{D_i} |X_j^N(s)-X_j(s)|^m \Big] \notag \\
	& \quad + \kappa_2 \left(\frac{1}{Np_N} + e^{-Np_N}\right)^{1/2}. \label{eq:CN_CN1}
\end{align}
To see this, note that from the Lipschitz property of $b$ and the definition of $\Smc_i^N$ we have
	\begin{align*}
		\Ebf |C_i^N(s)-C_i^{N,1}(s)| & \le \Ebf \Big[ \one_{\{D_i \ge d-1\}} \sum_{(j_2,\dotsc,j_d) \in \Smc_i^N} \alpha^N(i; j_2, j_3, \ldots, j_d) \notag \\ 
		& \qquad (|X_i^N(s)-X_i(s)| + |X_{j_2}^N(s)-X_{j_2}(s)| + \dotsb + |X_{j_d}^N(s)-X_{j_d}(s)|) \notag \\
		& \qquad + (d-1) \sum_{(j_2,\dotsc,j_d) \in \Smc_i^N} \one_{\{D_{j_2} \ge d-1\}} \alpha^N(j_2; i, j_3, \ldots, j_d) \notag \\
		& \qquad  (|X_i^N(s)-X_i(s)| + |X_{j_2}^N(s)-X_{j_2}(s)| + \dotsb + |X_{j_d}^N(s)-X_{j_d}(s)|) \Big], \notag \\
		& = d\ \Ebf \Big[ \one_{\{D_i \ge d-1\}} \sum_{(j_2,\dotsc,j_d) \in \Smc_i^N} \alpha^N(i; j_2, j_3, \ldots, j_d)   \notag \\ 
		& \qquad  (|X_i^N(s)-X_i(s)| + |X_{j_2}^N(s)-X_{j_2}(s)| + \dotsb + |X_{j_d}^N(s)-X_{j_d}(s)|) \Big] \notag \\
		& \le d\ \Ebf |X_i^N(s)-X_i(s)| + d(d-1) \Ebf \Big[ \one_{\{D_i \ge d-1\}} \sum_{j \in [N], j \ne i} \frac{\xi_{ij}}{D_i} |X_j^N(s)-X_j(s)| \Big], 
	\end{align*}
	where in obtaining the equality we have used the exchangeability property:
	\begin{align}
		& \Lmc(\xi_{ij_2},\xi_{ij_3},\dotsc,\xi_{ij_d},D_i, X_i^N(s),X_i(s),X_{j_2}^N(s),X_{j_2}(s),X_{j_3}^N(s),X_{j_3}(s),\dotsc,X_{j_d}^N(s),X_{j_d}(s)) \notag \\
		& = \Lmc(\xi_{j_2i},\xi_{j_2j_3},\dotsc,\xi_{j_2j_d},D_{j_2}, X_{j_2}^N(s),X_{j_2}(s),X_i^N(s),X_i(s),X_{j_3}^N(s),X_{j_3}(s),\dotsc,X_{j_d}^N(s),X_{j_d}(s)) \label{eq:exchangeability}
	\end{align}	
	for $(j_2,\dotsc,j_d) \in \Smc_i^N$.
	Therefore the claim \eqref{eq:CN_CN1} holds for $m=1$.
	Next we verify \eqref{eq:CN_CN1} when $m=2$.
	Note that
	\begin{align*}
		\Ebf |C_i^N(s)-C_i^{N,1}(s)|^2 & \le 2R_i^{N,1}(s) + 2(d-1)^2R_i^{N,2}(s),
	\end{align*}
	where
	\begin{align*}
		R_i^{N,1}(s) & \doteq \Ebf \Big[ \one_{\{D_i \ge d-1\}} \sum_{(j_2,\dotsc,j_d) \in \Smc_i^N} \alpha^N(i; j_2, j_3, \ldots, j_d) \\
		& \qquad  [b(X_i^N(s),X_{j_2}^N(s),\dotsc,X_{j_d}^N(s)) - b(X_i(s),X_{j_2}(s),\dotsc,X_{j_d}(s))] \Big]^2, \notag \\
		R_i^{N,2}(s) & \doteq \Ebf \Big[ \sum_{(j_2,\dotsc,j_d) \in \Smc_i^N} \one_{\{D_{j_2} \ge d-1\}} \alpha^N(j_2; i, j_3, \ldots, j_d) \\
		& \qquad  [b(X_i^N(s),X_{j_2}^N(s),\dotsc,X_{j_d}^N(s)) - b(X_i(s),X_{j_2}(s),\dotsc,X_{j_d}(s))] \Big]^2. \notag	
	\end{align*}
	From the Lipschitz property of $b$, the definition of $\Smc_i^N$ and Cauchy-Schwarz inequality we have
	\begin{align*}		
		R_i^{N,1}(s) & \le \Ebf \Big[ \one_{\{D_i \ge d-1\}} \sum_{(j_2,\dotsc,j_d) \in \Smc_i^N} \alpha^N(i; j_2, j_3, \ldots, j_d) \\
		& \qquad  (|X_i^N(s)-X_i(s)| + |X_{j_2}^N(s)-X_{j_2}(s)| + \dotsb + |X_{j_d}^N(s)-X_{j_d}(s)|) \Big]^2 \notag \\
		& = \Ebf \Big[ \one_{\{D_i \ge d-1\}} \Big( |X_i^N(s)-X_i(s)| +  (d-1) \sum_{j \in [N], j \ne i} \frac{\xi_{ij}}{D_i} |X_j^N(s)-X_j(s)| \Big) \Big]^2 \\
		& \le 2 \Ebf |X_i^N(s)-X_i(s)|^2 + 2(d-1)^2 \Ebf \Big[ \one_{\{D_i \ge d-1\}} \sum_{j \in [N], j \ne i} \frac{\xi_{ij}}{D_i} |X_j^N(s)-X_j(s)|^2 \Big].
	\end{align*}	
	From Cauchy-Schwarz inequality $(\sum a_ib_i)^2 \le (\sum a_i) (\sum a_i b_i^2)$ for non-negative $a_i$'s we have
	\begin{align*}
		R_i^{N,2}(s) & \le \Ebf \Big\{ \Big[ \sum_{(j_2,\dotsc,j_d) \in \Smc_i^N} \one_{\{D_{j_2} \ge d-1\}} \alpha^N(j_2; i, j_3, \ldots, j_d) \Big] \Big[ \sum_{(j_2,\dotsc,j_d) \in \Smc_i^N} \one_{\{D_{j_2} \ge d-1\}} \alpha^N(j_2; i, j_3, \ldots, j_d) \\
		& \qquad  [b(X_i^N(s),X_{j_2}^N(s),\dotsc,X_{j_d}^N(s)) - b(X_i(s),X_{j_2}(s),\dotsc,X_{j_d}(s))]^2 \Big] \Big\} \notag \\
		& = \Ebf \Big[ \sum_{(j_2,\dotsc,j_d) \in \Smc_i^N} \one_{\{D_{j_2} \ge d-1\}} \alpha^N(j_2; i, j_3, \ldots, j_d) \\
		& \qquad  [b(X_i^N(s),X_{j_2}^N(s),\dotsc,X_{j_d}^N(s)) - b(X_i(s),X_{j_2}(s),\dotsc,X_{j_d}(s))]^2 \Big] \notag \\
		& \quad + \Ebf \Big\{ \Big[ \sum_{j \in [N], j \ne i} \one_{\{D_j \ge d-1\}} \frac{\xi_{ji}}{D_j} - 1 \Big] \Big[ \sum_{(j_2,\dotsc,j_d) \in \Smc_i^N} \one_{\{D_{j_2} \ge d-1\}} \alpha^N(j_2; i, j_3, \ldots, j_d) \\
		& \qquad  [b(X_i^N(s),X_{j_2}^N(s),\dotsc,X_{j_d}^N(s)) - b(X_i(s),X_{j_2}(s),\dotsc,X_{j_d}(s))]^2 \Big] \Big\} \notag \\
		& \doteq R_i^{N,3}(s) + R_i^{N,4}(s),
	\end{align*}
	where the equality follows by adding and subtracting one in the first term.
	From the Lipschitz property of $b$, the definition of $\Smc_i^N$ and the exchangeability property \eqref{eq:exchangeability} we have
	\begin{align*}
		R_i^{N,3}(s) & \le d\ \Ebf \Big[ \sum_{(j_2,\dotsc,j_d) \in \Smc_i^N} \one_{\{D_{j_2} \ge d-1\}} \alpha^N(j_2; i, j_3, \ldots, j_d) \notag \\
		& \qquad (|X_i^N(s)-X_i(s)|^2 + |X_{j_2}^N(s)-X_{j_2}(s)|^2 + \dotsb + |X_{j_d}^N(s)-X_{j_d}(s)|^2) \Big] \notag \\
		& = d\ \Ebf \Big[ \sum_{(j_2,\dotsc,j_d) \in \Smc_i^N} \one_{\{D_i \ge d-1\}} \alpha^N(i; j_2, j_3, \ldots, j_d) \notag \\
		& \qquad (|X_i^N(s)-X_i(s)|^2 + |X_{j_2}^N(s)-X_{j_2}(s)|^2 + \dotsb + |X_{j_d}^N(s)-X_{j_d}(s)|^2) \Big] \notag \\
		& \le d\ \Ebf |X_i^N(s)-X_i(s)|^2 + d(d-1) \Ebf \Big[ \one_{\{D_i \ge d-1\}} \sum_{j \in [N], j \ne i} \frac{\xi_{ij}}{D_i} |X_j^N(s)-X_j(s)|^2 \Big]. 
	\end{align*}
	From the fact that $\|b\|_\infty \le 1$ we have
	\begin{align*}
		R_i^{N,4}(s) & \le \Ebf \Big\{ \Big| \sum_{j \in [N], j \ne i} \one_{\{D_j > 0\}} \frac{\xi_{ji}}{D_j} - 1 \Big| \Big[ 4 \sum_{(j_2,\dotsc,j_d) \in \Smc_i^N} \one_{\{D_{j_2} \ge d-1\}} \alpha^N(j_2; i, j_3, \ldots, j_d) \Big] \Big\} \notag \\
		& \le 4 \Ebf \Big\{ \Big| \sum_{j \in [N], j \ne i} \one_{\{D_j > 0\}} \frac{\xi_{ji}}{D_j} - 1 \Big| \sum_{j \in [N], j \ne i} \one_{\{D_j > 0\}} \frac{\xi_{ji}}{D_j} \Big\} \notag \\
		& \le \kappa_3 \left( \frac{1}{Np_N} + e^{-Np_N} \right)^{1/2}.
	\end{align*}
	where the last inequality follows from Lemma \ref{lem:prep_2} and Condition \ref{cond:errg1}.
	Combining the above estimates on $R_i^{N,k}(s)$ for $k=1,2,3,4$ gives the claim \eqref{eq:CN_CN1} when $m=2$.
	
	Now using the exchangeability property:
	\begin{align*}
		\Lmc(\xi_{ij}, D_i, X_j^N(s), X_j(s)) & = \Lmc(\xi_{ji}, D_j, X_i^N(s), X_i(s)), \quad i \ne j,
	\end{align*}	
	we have for $m=1,2$,
	\begin{align*}
		& \quad \Ebf \Big[ \one_{\{D_i \ge d-1\}} \sum_{j \in [N], j \ne i} \frac{\xi_{ij}}{D_i} |X_j^N(s)-X_j(s)|^m \Big] \\
		& = \Ebf \Big[ \sum_{j \in [N], j \ne i} \one_{\{D_j \ge d-1\}} \frac{\xi_{ji}}{D_j} |X_i^N(s)-X_i(s)|^m \Big] \\
		& \le \Ebf \Big[ \Big( \sum_{j \in [N], j \ne i} \one_{\{D_j > 0\}}  \frac{\xi_{ji}}{D_j} - 1 \Big) |X_i^N(s)-X_i(s)|^m \Big] + \Ebf |X_i^N(s)-X_i(s)|^m \\
		& \le \Big[ \Ebf \Big( \sum_{j \in [N], j \ne i} \one_{\{D_j > 0\}}  \frac{\xi_{ji}}{D_j} - 1 \Big)^2 \Ebf |X_i^N(s)-X_i(s)|^{2m} \Big]^{1/2} + \Ebf |X_i^N(s)-X_i(s)|^m \\
		& \le \kappa_4 \Big(\frac{1}{Np_N} + e^{-Np_N}\Big)^{1/2} + \Ebf |X_i^N(s)-X_i(s)|^m, 
	\end{align*}	
	where the second inequality follows from Cauchy-Schwarz inequality and the last line follows from Lemmas \ref{lem:prep_2} and \ref{lem:momentbd}.
	Combining this, \eqref{eq:CN_CN1} with the fact that $|X_i^N(s)-X_i(s)| \le |X_i^N(s)-X_i(s)|^2$ gives
	\begin{equation}
		\label{eq:npn_difference_1}
		\Ebf |C_i^N(s)-C_i^{N,1}(s)| + \Ebf |C_i^N(s)-C_i^{N,1}(s)|^2 \le \kappa_5 \Ebf |X_i^N(s)-X_i(s)|^2 + \kappa_5 \left(\frac{1}{Np_N} + e^{-Np_N}\right)^{1/2}.
	\end{equation}
	
	Next we consider $|C_i^{N,1}(s)-C_i^{N,2}(s)|^2$.
	From the inequality $(a+b)^2 \le 2a^2+2b^2$, it follows  that
	\begin{align}
		\left( \Ebf |C_i^{N,1}(s)-C_i^{N,2}(s)| \right)^2 \le \Ebf |C_i^{N,1}(s)-C_i^{N,2}(s)|^2
		 \le 2U_s^A + 2(d-1)^2V_s^A \le \frac{\kappa_6}{Np_N} + \frac{\kappa_6}{(Np_N)^2},\label{eq:npn_difference_2}
	\end{align}
	where $U_s^A$ and $V_s^A$ were introduced in \eqref{eq:prep_new_i} and \eqref{eq:prep_new_j} and the last inequality is from Lemma
	\ref{lem:prep_new}.

	Finally we consider $|C_i^{N,2}(s)-C_i(s)|^2$.
	Note that $C_i^{N,2}(s)$ can be rewritten as
	\begin{align*}
		C_i^{N,2}(s) & = \one_{\{D_i<d-1\}} \bbar_i((X_k^N(t))_{k \in [N]},(\xi_{kl})_{k,l \in [N]}) \\
		& \quad + \one_{\{D_i \ge d-1\}} \frac{C_i(s)}{d} + (d-1) \sum_{j \in [N], j \ne i} \one_{\{D_{j} \ge d-1\}} \frac{\xi_{ji}}{D_j} \frac{C_i(s)}{d} \\
		& \quad + \sum_{j \in [N], j \ne i} \one_{\{D_{j}<d-1\}}\xi_{ij} \bbar_{ij}((X_k^N(t))_{k \in [N]},(\xi_{kl})_{k,l \in [N]}).
	\end{align*}
	Using the Cauchy-Schwarz inequality and the fact that $0 \le \frac{C_i(s)}{d} \le 1$, we have
	\begin{align}
		& \Ebf |C_i^{N,2}(s)-C_i(s)|^2 \notag \\
		& \le 5\Ebf \Big[ \one_{\{D_i<d-1\}} (D_i+1) \Big]^2 + 5\Ebf \Big[ \one_{\{D_i<d-1\}} \frac{C_i(s)}{d} \Big]^2 + 5\Ebf \left[(d-1)\sum_{j \in [N], j \ne i} \one_{\{0 < D_{j} < d-1\}} \frac{\xi_{ji}}{D_j} \frac{C_i(s)}{d}\right]^2 \notag \\
		& \quad + 5\Ebf \Big[ \frac{(d-1)C_i(s)}{d} \Big| \sum_{j \in [N], j \ne i} \one_{\{D_{j} > 0\}} \frac{\xi_{ji}}{D_j} -1 \Big| \Big]^2 + 5\Ebf \Big[ \sum_{j \in [N], j \ne i} \one_{\{D_{j}<d-1\}}\xi_{ij} (D_i+1) \Big]^2 \notag \\
		& \le 5(d^2+1) \Pbf(D_i < d-1) + 5(d-1)^2\Ebf \left[\sum_{j \in [N], j \ne i} \one_{\{0 < D_{j} < d-1\}} \frac{\xi_{ji}}{D_j}\right]^2 \notag \\
		& \quad + 5(d-1)^2 \Ebf \Big[ \sum_{j \in [N], j \ne i} \one_{\{D_{j} > 0\}} \frac{\xi_{ji}}{D_j} -1 \Big]^2 + 5\Ebf \Big[ \sum_{j \in [N], j \ne i} \one_{\{D_{j}<d-1\}}\xi_{ij} (D_i+1) \Big]^2. 
		\label{eq:npn_difference_3_temp}
	\end{align}	
	Note that on the right hand side of \eqref{eq:npn_difference_3_temp}, the second term can be bounded by the last term as follows
	\begin{equation*}
		\Ebf \Big[\sum_{j \in [N], j \ne i} \one_{\{0 < D_{j} < d-1\}} \frac{\xi_{ji}}{D_j}\Big]^2 \le \Ebf \Big[\sum_{j \in [N], j \ne i} \one_{\{D_{j} < d-1\}} \xi_{ji}\Big]^2 \le \Ebf \Big[ \sum_{j \in [N], j \ne i} \one_{\{D_{j}<d-1\}}\xi_{ij} (D_i+1) \Big]^2.
	\end{equation*}
	For the last term in \eqref{eq:npn_difference_3_temp} we have
	\begin{align*}
		\Ebf \Big[ \sum_{j \in [N], j \ne i} \one_{\{D_{j}<d-1\}}\xi_{ij} (D_i+1) \Big]^2
		& \le \Ebf \Big\{ \Big[ \sum_{j \in [N], j \ne i} \one_{\{D_{j}<d-1\}}\xi_{ij} (D_i+1)^2 \Big] \Big[ \sum_{j \in [N], j \ne i} \xi_{ij} \Big] \Big\} \\
		& = \sum_{j \in [N], j \ne i} \Ebf \Big[ \one_{\{D_{j}<d-1\}}\xi_{ij} (D_i+1)^2 D_i \Big] \\
		& = \sum_{j \in [N], j \ne i} \Ebf \Big[ \one_{\{D_{j}-\xi_{ij}+1<d-1\}} (D_i-\xi_{ij}+2)^2 (D_i-\xi_{ij}+1) \Big] p_N \\
		& \le \kappa_7 (N-1) \Pbf(D_i<d) (Np_N+1)^3p_N,
	\end{align*}
	where the first inequality follows from Cauchy-Schwarz inequality, the second equality follows by conditioning on $\xi_{ij}=1$, and the last inequality follows from independence, Condition \ref{cond:errg1} and moment estimates of binomial random variables.	
	Furthermore, note that
	\begin{align}
 		\Pbf(D_i<d) 
		& = \sum_{k=0}^{d-1}\binom{N-1}{k} p_N^k (1-p_N)^{N-1-k}  \notag \\
		& \le \kappa_8 (1-p_N)^{N-d} \left[ 1 + Np_N + \dotsb + (Np_N)^{d-1} \right] \notag \\
		& \le \kappa_9 [1+(Np_N)^{d-1}] e^{-(N-d)p_N}.
		\label{eq:CN_bd_2}
	\end{align}	
	Combining above four estimates with Lemma \ref{lem:prep_2} gives
	\begin{equation}
		\label{eq:npn_difference_3}
		\left( \Ebf |C_i^{N,2}(s)-C_i(s)| \right)^2 \le \Ebf |C_i^{N,2}(s)-C_i(s)|^2 \le \kappa_0 [1+(Np_N)^{d+3}] e^{-Np_N} + \kappa_0 \Big(\frac{1}{Np_N} + e^{-Np_N}\Big).
	\end{equation}	
	Combining \eqref{eq:npn_2}, \eqref{eq:npn_1}, \eqref{eq:npn_3}, \eqref{eq:npn_difference_1}, \eqref{eq:npn_difference_2}, \eqref{eq:npn_difference_3} and Condition \ref{cond:errg1} gives us
	\begin{align*}
		\max_{i \in [N]} \sqrt{Np_N} \Ebf \left\|X_i^N - X_i\right\|_{*,t}^2 & \le \kappa \int_0^t \max_{i \in [N]} \sqrt{Np_N} \Ebf \left\|X_i^N - X_i\right\|_{*,s}^2 \, ds + \kappa.
	\end{align*}
	Part (i) of the theorem now follows from Gronwall's lemma.
	
	The proof of propagation of chaos property as stated in Theorem~\ref{thm:npn_rate} (ii) follows now from standard arguments (cf.\ \cite{Sznitman1991}), and hence is omitted.
	Also, having proved Theorem~\ref{thm:npn_rate} (i), the proof of convergence of local occupancy measures as stated in Theorem~\ref{thm:npn_rate} (iii) can be established using similar arguments as in \cite[Corollary 3.3]{BhamidiBudhirajaWu2016}.
\end{proof}

We now complete the proof of Theorem \ref{thm:npn_rate_quench}.

\begin{proof}[Proof of Theorem~\ref{thm:npn_rate_quench}]
In order to prove the theorem it suffices, in view of Theorems \ref{th:deterministic} and~\ref{th:tagged}, to show that  if  $\{G_N\}$ satisfies Condition~\ref{cond:errg2}, then it satisfies Condition~\ref{cond-reg} a.s.

Using the Chernoff inequality (cf. \cite[Theorem 2.4]{CL06}), it follows that for every $x \ge 0$ and $N \in \Nmb$,
\begin{equation*}
	\Pbf(|D_i^N-\Ebf D_i^N| \ge x) \le 2 \exp \left\{ -\frac{x^2}{2\Ebf D_i^N + 2x/3}\right\}.
\end{equation*}
Let $k(N)\doteq Np_N/\ln(N).$ Note that by Condition \ref{cond:errg2}, $k(N) \to \infty$ as $N\to\infty$.
Since $\Ebf D_i^N = (N-1)p_N$ taking $x=x(N) =\ln(N)(k(N))^{3/4}$ in the above expression yields, for some $\kappa_1 \in (0,\infty)$,
\begin{equation}\label{eq:chernoff}
\begin{split}
\Pbf(|D_i^N-Np_N| \ge x(N)) & \le \Pbf(|D_i^N-\Ebf D_i^N| \ge x(N)-p_N) \\
	& \le 2 \exp \Big\{ -\frac{(x(N)-p_N)^2}{2(N-1)p_N + 2(x(N)-p_N)/3}\Big\}\\
	& \le \kappa_1\exp \Big\{ -\kappa_1\frac{(x(N))^2}{Np_N}\Big\},
\end{split}
\end{equation}
for sufficiently large $N$.
Thus
\begin{equation}
	\Pbf\left(\bigcup_{i\in [N]}\left\{|D_i^N-Np_N| \ge x(N)\right\}\right) \le \kappa_1N\exp \Big\{ -\kappa_1\frac{(x(N))^2}{Np_N}\Big\}.
	\label{eq:uninbd}
\end{equation}
From the choice ot $x(N)$, we have  $(x(N))^2/[Np_N\ln(N)]\to \infty$, as $N\to\infty$.
Therefore, the right side of~\eqref{eq:uninbd} is summable over $N$.
From  Borel--Cantelli lemma we conclude a.s., for all sufficiently large~$N$,
\begin{equation*}
	|D_i^N-Np_N| \le x(N), \quad i \in [N]
\end{equation*}
and therefore for all such $N$
\begin{equation}
		\label{eq:degree_key}
		Np_N - x(N) \le d_{\min}(G_N) \le d_{\max}(G_N) \le Np_N + x(N)
\end{equation}
Finally, observe that
$$\frac{x(N)}{Np_N} = \frac{\ln(N) (k(N))^{3/4}}{k(N) \ln(N)}= \frac{1}{(k(N))^{1/4}}\to 0 \quad\mbox{as }N\to\infty.$$
Combining the two displays, $d_{\min}(G_N)  \to \infty$ and
$$\frac{d_{\max}(G_N)-d_{\min}(G_N)}{d_{\min}(G_N)} = \frac{2x(N)}{Np_N-x(N)}\to 0,$$
as $N\to\infty$. 
This together with Remark \ref{rmk:weaker_condition} shows that Condition~\ref{cond-reg} holds for $\{G_N\}$ a.s., completing the proof of Theorem \ref{thm:npn_rate_quench}.
\end{proof}
We now complete the proof of Lemma~\ref{lem:prep_new}. We begin with the following lemma from \cite{BhamidiBudhirajaWu2016}.

\begin{lemma}[{\cite[Lemma~5.1]{BhamidiBudhirajaWu2016}}]
	\label{lem:prep_1}
	Let $X$ be a Binomial random variable with number of trials $N$ and probability of success $p$.
	Let $q \doteq 1 - p$.
	Then for each $m \in \Nmb$,
	\begin{equation*}
		\E \left[ \one_{\{X>0\}} \frac{1}{(2X)^m} \right] \le \E \frac{1}{(X+1)^m} \le \frac{m^m}{(N+1)^mp^m}.
	\end{equation*}
\end{lemma}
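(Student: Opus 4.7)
The first inequality is essentially free: on the event $\{X\ge 1\}$ we have $2X\ge X+1$, so $(2X)^{-m}\le (X+1)^{-m}$ pointwise, and the remaining contribution from $\{X=0\}$ on the right-hand side is non-negative. The substantive part is the upper bound $\E[(X+1)^{-m}]\le m^m/[(N+1)^m p^m]$, and the plan is to reduce it to a telescoping binomial identity.

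The key preliminary observation I would use is the pointwise bound
\[
\frac{1}{(k+1)^m}\;\le\;\frac{m^m}{(k+1)(k+2)\cdots(k+m)}\qquad\text{for every }k\in\Nmb_0.
\]
This follows from the two elementary inequalities $(k+1)(k+2)\cdots(k+m)\le (k+m)^m$ and $(k+m)^m\le m^m(k+1)^m$; the latter is just $k+m\le m(k+1)$, i.e.\ $0\le (m-1)k$. So after taking expectations it suffices to bound the falling-factorial-type moment $\E\bigl[\prod_{j=1}^m (X+j)^{-1}\bigr]$.

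For that, I would compute directly. Using the identity
\[
\binom{N}{k}\cdot\frac{1}{(k+1)(k+2)\cdots(k+m)}\;=\;\frac{N!}{(k+m)!(N-k)!}\cdot\frac{1}{\prod_{j=1}^m(N+j)}\cdot\prod_{j=1}^m(N+j)\cdot\frac{1}{\prod_{j=1}^m(N+j)}\;=\;\frac{1}{\prod_{j=1}^m(N+j)}\binom{N+m}{k+m},
\]
one obtains
\[
\E\prod_{j=1}^m\frac{1}{X+j}\;=\;\frac{1}{\prod_{j=1}^m(N+j)}\sum_{k=0}^{N}\binom{N+m}{k+m}p^k q^{N-k}\;=\;\frac{1}{p^m\prod_{j=1}^m(N+j)}\sum_{k=0}^{N}\binom{N+m}{k+m}p^{k+m}q^{N-k}.
\]
The last sum is a partial binomial sum for $(p+q)^{N+m}=1$ and is therefore at most $1$. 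Since $\prod_{j=1}^m(N+j)\ge (N+1)^m$, this gives $\E\prod_{j=1}^m(X+j)^{-1}\le 1/[(N+1)^m p^m]$, and combining with the pointwise bound above yields the claimed estimate with constant $m^m$.

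There is no genuine obstacle here; the only thing to be a little careful about is the bookkeeping in the falling-factorial identity and the fact that after inserting the missing $p^m$ the resulting series is only a tail of the binomial expansion of $(p+q)^{N+m}$, so the bound is by $1$ rather than by equality. All other steps are elementary pointwise inequalities.
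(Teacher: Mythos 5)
Your proof is correct, and all the steps check out: the pointwise comparison $2X\ge X+1$ on $\{X\ge 1\}$, the elementary bound $(k+1)(k+2)\cdots(k+m)\le (k+m)^m\le m^m(k+1)^m$, the identity $\binom{N}{k}\prod_{j=1}^m(k+j)^{-1}=\binom{N+m}{k+m}\prod_{j=1}^m(N+j)^{-1}$, and the final estimate of the shifted binomial sum by $(p+q)^{N+m}=1$. Note that the paper itself gives no proof of this lemma -- it is quoted verbatim from \cite[Lemma~5.1]{BhamidiBudhirajaWu2016} -- but your argument is the standard one for bounding inverse moments of a binomial via rising factorials and is essentially the proof given in that reference.
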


\begin{proof}[Proof of Lemma~\ref{lem:prep_new}]
	As before, we will omit the superscript in $\xi_{ij}$'s and $D_i$'s for notational convenience.
	We first show \eqref{eq:prep_new_i}.
	From the independence between $\{X_i\}$ and $\{\xi_{ij}\}$ it follows that
	\begin{align}
		U_s^A & = \sum_{(j_2,\dotsc,j_d) \in \Smc_i^N} \sum_{(k_2,\dotsc,k_d) \in \Smc_i^N} \Ebf \left[ \one_{\{D_i \ge d-1\}} \alpha^N(i; j_2, j_3, \ldots, j_d) \alpha^N(i; k_2, k_3, \ldots, k_d)\right] \notag \\
		& \quad \Ebf \left[ \left(b(X_i(s),X_{j_2}(s),\dotsc,X_{j_d}(s)) - \frac{C_i(s)}{d} \right) \left(b(X_i(s),X_{k_2}(s),\dotsc,X_{k_d}(s)) - \frac{C_i(s)}{d} \right) \right]. \notag
	\end{align}
	Noting that
	\begin{equation}
		\label{eq:prep_new_simplify}
		\Ebf \left[ \left(b(X_i(s),X_{j_2}(s),\dotsc,X_{j_d}(s)) - \frac{C_i(s)}{d} \right) \left(b(X_i(s),X_{k_2}(s),\dotsc,X_{k_d}(s)) - \frac{C_i(s)}{d} \right) \right]=0
	\end{equation}
	when $(i,j_2,k_2,\dotsc,j_d,k_d)$ are distinct, we have
	\begin{align}
		U_s^A & = \sum \Ebf \left[ \one_{\{D_i \ge d-1\}} \alpha^N(i; j_2, j_3, \ldots, j_d) \alpha^N(i; k_2, k_3, \ldots, k_d)\right] \notag \\
		& \quad \Ebf \left[ \left(b(X_i(s),X_{j_2}(s),\dotsc,X_{j_d}(s)) - \frac{C_i(s)}{d} \right) \left(b(X_i(s),X_{k_2}(s),\dotsc,X_{k_d}(s)) - \frac{C_i(s)}{d} \right) \right] \notag \\
		& \le \Ebf \left[ \sum \one_{\{D_i \ge d-1\}} \alpha^N(i; j_2, j_3, \ldots, j_d) \alpha^N(i; k_2, k_3, \ldots, k_d)\right], \label{eq:prep_new_2}
	\end{align}
	where the summation is taken over the collection $\hat \Smc_i^N$ defined in \eqref{eq:prep_quench_3}
	%
	and the inequality follows since $0 \le b \le 1$ and $0 \le \frac{C_i(s)}{d} \le 1$.
	As noted in \eqref{eq:bdonbindiff},  the total number of combinations in \eqref{eq:prep_quench_3} such that $(\xi_{ij_2}\xi_{ij_3}\dotsm\xi_{ij_d})(\xi_{ik_2}\xi_{ik_3}\dotsm\xi_{ik_d})=1$ is no more than
	$ \kappa_1 D_i^{2d-3}$ and thus
	we can bound \eqref{eq:prep_new_2} by
	\begin{align*}
		\Ebf \left[ \one_{\{D_i \ge d-1\}} \frac{\kappa_1 D_i^{2d-3}}{D_i^2(D_i-1)^2\dotsm(D_i-d+2)^2} \right] \le \kappa_2 \Ebf \left[ \one_{\{D_i > 0\}} \frac{1}{D_i} \right] \le \frac{2\kappa_2}{Np_N},
	\end{align*}
	where the last inequality uses Lemma \ref{lem:prep_1}.
	This gives the first inequality in Lemma \ref{lem:prep_new}.
	
	Next we show the second inequality in Lemma \ref{lem:prep_new}.
	From the independence between $\{X_i\}$ and $\{\xi_{ij}\}$ and \eqref{eq:prep_new_simplify} it follows from the same argument used for \eqref{eq:prep_new_2} that 
	\begin{equation}
		V_s^A \le \Ebf \left[ \sum \one_{\{D_{j_2} \ge d-1\}} \one_{\{D_{k_2} \ge d-1\}} \alpha^N(j_2; i, j_3, \ldots, j_d) \alpha^N(k_2; i, k_3, \ldots, k_d)\right], \label{eq:prep_new_4}
	\end{equation}
	where the summation is taken over $\hat \Smc_i^N$ defined in \eqref{eq:prep_quench_3}.
	As noted in \eqref{eq:d2d3bd},  for fixed $(j_2,k_2) \in \Smcbar_i$ with $\Smcbar_i$ as in~\eqref{eq:smcbar}, the total number of combinations in $\hat \Smc_i^N$  such that 
	\[(\xi_{j_2i}\xi_{j_2j_3}\dotsm\xi_{j_2j_d})(\xi_{k_2i}\xi_{k_2k_3}\dotsm\xi_{k_2k_d})=1\] is no more than
	$\kappa_3 (D_{j_2}^{d-3}D_{k_2}^{d-2} + D_{j_2}^{d-2}D_{k_2}^{d-3})$
	we can bound \eqref{eq:prep_new_4} by
	\begin{align}
		& \Ebf \left[ \sum_{(j_2,k_2) \in \Smcbar_i} \one_{\{D_{j_2} \ge d-1\}} \one_{\{D_{k_2} \ge d-1\}} \frac{\kappa_3 (D_{j_2}^{d-3}D_{k_2}^{d-2} + D_{j_2}^{d-2}D_{k_2}^{d-3})\xi_{j_2i}\xi_{k_2i}}{D_{j_2}(D_{j_2}-1)\dotsm(D_{j_2}-d+2)D_{k_2}(D_{k_2}-1)\dotsm(D_{k_2}-d+2)}\right] \notag \\
		& \le \kappa_4 \sum_{(j_2,k_2) \in \Smcbar_i} \Ebf \left[ \one_{\{D_{j_2} \ge d-1\}} \one_{\{D_{k_2} \ge d-1\}} \left( \frac{\xi_{j_2i}\xi_{k_2i}}{D_{j_2}^2D_{k_2}} + \frac{\xi_{j_2i}\xi_{k_2i}}{D_{j_2}D_{k_2}^2} \right) \right] \notag \\
		& = 2 \kappa_4 \sum_{(j,k) \in \Smcbar_i} \Ebf \left[ \one_{\{D_{j} \ge d-1\}} \one_{\{D_{k} \ge d-1\}} \frac{\xi_{ji}\xi_{ki}}{D_{j}^2D_{k}} \right]. \label{eq:prep_new_5}
	\end{align}
	Now for $(j,k) \in \Smcbar_i$ with $j \ne k$, we have
	\begin{align*}
		& \Ebf \left[ \one_{\{D_{j} \ge d-1\}} \one_{\{D_{k} \ge d-1\}} \frac{\xi_{ji}\xi_{ki}}{D_{j}^2D_{k}} \right] \\
		& = \Ebf \left[ \one_{\{\xi_{jk}=1\}} \one_{\{D_{j} \ge d-1\}} \one_{\{D_{k} \ge d-1\}} \frac{\xi_{ji}\xi_{ki}}{D_{j}^2D_{k}} \right] + \Ebf \left[ \one_{\{\xi_{jk}=0\}} \one_{\{D_{j} \ge d-1\}} \one_{\{D_{k} \ge d-1\}} \frac{\xi_{ji}\xi_{ki}}{D_{j}^2D_{k}} \right] \\
		& \le \Ebf \left[ \frac{\xi_{ji}\xi_{ki}}{(D_{j}-\xi_{jk}+1)^2(D_{k}-\xi_{jk}+1)} \right] + \Ebf \left[ \one_{\{D_j-\xi_{jk} > 0\}} \one_{\{D_{k}-\xi_{jk} > 0\}} \frac{\xi_{ji}\xi_{ki}}{(D_{j}-\xi_{jk})^2(D_{k}-\xi_{jk})} \right] \\
		& = \Ebf \left[ \frac{\xi_{ji}}{(D_{j}-\xi_{jk}+1)^2} \right] \Ebf \left[ \frac{\xi_{ki}}{D_{k}-\xi_{jk}+1} \right]
		+ \Ebf \left[ \one_{\{D_j-\xi_{jk} > 0\}} \frac{\xi_{ji}}{(D_{j}-\xi_{jk})^2} \right] \Ebf \left[ \one_{\{D_{k}-\xi_{jk} > 0\}} \frac{\xi_{ki}}{D_{k}-\xi_{jk}} \right],
	\end{align*}
	where the last equality follows from independence between $(\xi_{ji},D_j-\xi_{jk})$ and $(\xi_{ki},D_k-\xi_{jk})$.
	Using exchangeability and Lemma \ref{lem:prep_1} we have
	\begin{align*}
		\Ebf \left[ \frac{\xi_{ji}}{(D_{j}-\xi_{jk}+1)^2} \right] & = \frac{1}{N-2} \sum_{l \in [N], l \ne j,k} \Ebf \left[ \frac{\xi_{jl}}{(D_{j}-\xi_{jk}+1)^2} \right] = \frac{1}{N-2} \Ebf \left[ \frac{D_j-\xi_{jk}}{(D_{j}-\xi_{jk}+1)^2} \right] \\
		& \le \frac{1}{N-2} \Ebf \left[ \frac{1}{D_{j}-\xi_{jk}+1} \right] \le \frac{1}{(N-2)(N-1)p_N}.
	\end{align*}
	Similarly one can verify that
	\begin{align*}
		 &\Ebf \left[ \frac{\xi_{ki}}{D_{k}-\xi_{jk}+1} \right] \le \frac{1}{N-2}, \\
		 &\Ebf \left[ \one_{\{D_j-\xi_{jk} > 0\}} \frac{\xi_{ji}}{(D_{j}-\xi_{jk})^2} \right] \le \frac{4}{(N-2)(N-1)p_N}, \quad
		 \Ebf \left[ \one_{\{D_{k}-\xi_{jk} > 0\}} \frac{\xi_{ki}}{D_{k}-\xi_{jk}} \right] \le \frac{1}{N-2}.
	\end{align*}
	Combining these gives us 
	\begin{equation*}
		\Ebf \left[ \one_{\{D_{j} \ge d-1\}} \one_{\{D_{k} \ge d-1\}} \frac{\xi_{ji}\xi_{ki}}{D_{j}^2D_{k}} \right] \le \frac{5}{(N-2)^2(N-1)p_N}, \mbox{ when } j \ne k.
	\end{equation*}
	Also note that the summation in \eqref{eq:prep_new_5} when $j=k$ is 
	\begin{equation*}
		\sum_{j=1,j \ne i}^N \Ebf \left[ \one_{\{D_{j} \ge d-1\}} \frac{\xi_{ji}}{D_{j}^3} \right] = \sum_{j=1,j \ne i}^N \Ebf \left[ \one_{\{D_{i} \ge d-1\}} \frac{\xi_{ij}}{D_{i}^3} \right] = \Ebf \left[ \one_{\{D_{i} \ge d-1\}} \frac{1}{D_i^2} \right] \le \frac{4}{(Np_N)^2},
	\end{equation*}
	where the first equality uses exchangeability and the inequality uses Lemma \ref{lem:prep_1}.
	Combining these two estimates with \eqref{eq:prep_new_5} gives
	\begin{equation*}
		V_s^A \le \kappa_5  \frac{N^2}{(N-2)^2(N-1)p_N} + \kappa_5 \frac{1}{(Np_N)^2}\le \frac{\kappa_6}{Np_N} + \frac{\kappa_6}{(Np_N)^2}
	\end{equation*}
	for some $\kappa_5, \kappa_6 \in (0,\infty)$.
	This completes the proof of Lemma~\ref{lem:prep_new}.
\end{proof}

Finally we complete the proof of Lemma \ref{lem:momentbd}.
\begin{proof}[Proof of Lemma \ref{lem:momentbd}]
	As before, we will omit the superscript in $\xi_{ij}$'s and $D_i$'s for notational convenience.
	Fix $i\in\N$.
	From \eqref{eq:X_i_n} and \eqref{eq:limit-tagged}, using Cauchy--Schwarz and Doob's inequalities we have for any fixed $t\in [0,T]$	
	\begin{align}
		\Ebf \left\|X_i^N - X_i\right\|_{*,t}^4 & \le 
		\kappa_1 \int_0^t \Ebf |X_i^N(s) - X_i(s)|^4 \, ds + \kappa_1 \int_0^t \Ebf |C_i^N(s)-C_i(s)|^2 \, ds + \kappa_1 \int_0^t \Ebf |C_i^N(s)-C_i(s)|^4 \, ds. \label{eq:momentbd1}
	\end{align}
	Recall the definition of $C_i^N(s)$ and $C_i(s)$ from~\eqref{eq:cn} and \eqref{eq:limit-tagged}.
	From the bound $\|b\|_\infty \le 1$ and \eqref{eq:bbar}, for $s \in [0,T]$ we have $|C_i(s)| \le d$ and
	\begin{align}
		\Ebf |C_i^N(s)|^4 & \le \Ebf \left| \one_{\{D_i<d-1\}}(D_i+1) + 1 + (d-1) \sum_{j_2 \in [N], j_2 \ne i} \one_{\{D_{j_2} \ge d-1\}} \frac{\xi_{j_2i}}{D_{j_2}} \right. \notag \\
		& \left. \qquad + \sum_{j_2 \in [N], j_2 \ne i} \one_{\{D_{j_2} < d-1\}} \xi_{ij_2}(D_i+1) \right|^4 \notag \\
		& \le \kappa_2 + \kappa_2 \Ebf \Big[ \sum_{j_2 \in [N], j_2 \ne i} \one_{\{D_{j_2} \ge d-1\}} \frac{\xi_{j_2i}}{D_{j_2}} \Big]^4 + \kappa_2 \Ebf\Big[\sum_{j_2 \in [N], j_2 \ne i} \one_{\{D_{j_2} < d-1\}} \xi_{ij_2}(D_i+1)\Big]^4. \label{eq:CN_bd}
	\end{align}
	Here the second term on the right hand side can be written as
	\begin{align*}
		& \kappa_2 \Ebf \Big[ \sum_{j_2 \in [N], j_2 \ne i} \one_{\{D_{j_2} \ge d-1, D_i > 0\}} \frac{D_i}{D_{j_2}} \frac{\xi_{j_2i}}{D_i} \Big]^4 \\
		& \le \kappa_2 \Ebf \Big\{ \Big[ \sum_{j_2 \in [N], j_2 \ne i} \one_{\{D_{j_2} \ge d-1, D_i > 0\}} \left(\frac{D_i}{D_{j_2}}\right)^4 \frac{\xi_{j_2i}}{D_i} \Big] \Big[ \sum_{j_2 \in [N], j_2 \ne i} \one_{\{D_{j_2} \ge d-1, D_i > 0\}} \frac{\xi_{j_2i}}{D_i} \Big]^3 \Big\} \\
		& \le \kappa_2 \Ebf \sum_{j_2 \in [N], j_2 \ne i} \one_{\{D_{j_2} \ge d-1\}} \frac{D_i^3 \xi_{j_2i}}{D_{j_2}^4}\\
		& = \kappa_2 \sum_{j_2 \in [N], j_2 \ne i} \Ebf \Big[ \one_{\{D_{j_2}-\xi_{j_2i}+1 \ge d-1\}} \frac{(D_i-\xi_{j_2i}+1)^3 }{(D_{j_2}-\xi_{j_2i}+1)^4} \Big] p_N \\
		& = \kappa_2 \sum_{j_2 \in [N], j_2 \ne i} \Ebf \Big[ \one_{\{D_{j_2}-\xi_{j_2i}+1 \ge d-1\}} \frac{1}{(D_{j_2}-\xi_{j_2i}+1)^4} \Big] \Ebf \Big[ D_i-\xi_{j_2i}+1 \Big]^3 p_N \\
		& \le \kappa_3 (N-1) \frac{1}{(N-1)^4p_N^4}(Np_N+1)^3p_N \le \kappa_4,
	\end{align*}
	where the second line uses Holder's inequality, the fourth line follows by conditioning on $\xi_{j_2i}=1$, the fifth line follows from independence, and the last line uses Lemma \ref{lem:prep_1} and moment estimates of binomial random variables.
	Following the similar argument, we can write the last term in \eqref{eq:CN_bd} as
	\begin{align*}
		& \kappa_2 \Ebf\Big[\sum_{j_2 \in [N], j_2 \ne i} \one_{\{D_{j_2} < d-1, D_i > 0\}} D_i(D_i+1) \frac{\xi_{ij_2}}{D_i} \Big]^4 \notag \\
		& \le \kappa_2 \Ebf \Big\{ \Big[ \sum_{j_2 \in [N], j_2 \ne i} \one_{\{D_{j_2} < d-1, D_i > 0\}} D_i^4(D_i+1)^4 \frac{\xi_{ij_2}}{D_i} \Big] \Big[ \sum_{j_2 \in [N], j_2 \ne i} \one_{\{D_{j_2} < d-1, D_i > 0\}} \frac{\xi_{ij_2}}{D_i} \Big]^3 \Big\} \notag \\
		& \le \kappa_2 \Ebf \sum_{j_2 \in [N], j_2 \ne i} \one_{\{D_{j_2} < d-1\}} D_i^3(D_i+1)^4\xi_{ij_2} \notag \\
		& = \kappa_2 \sum_{j_2 \in [N], j_2 \ne i} \Ebf \Big[\one_{\{D_{j_2}-\xi_{ij_2}+1 < d-1\}} (D_i-\xi_{ij_2}+1)^3 (D_i-\xi_{ij_2}+2)^4 \Big]p_N \notag \\
		& = \kappa_2 \sum_{j_2 \in [N], j_2 \ne i} \Ebf \Big[\one_{\{D_{j_2}-\xi_{ij_2}+1 < d-1\}} \Big] \Ebf \Big[ (D_i-\xi_{ij_2}+1)^3 (D_i-\xi_{ij_2}+2)^4 \Big] p_N \notag \\
		& \le \kappa_5 (N-1) \Pbf (D_{i}< d) (Np_N+1)^7 p_N. 
	\end{align*}
	Combining above three estimates with \eqref{eq:CN_bd_2} and using Condition \ref{cond:errg1}, we have
		$\Ebf |C_i^N(s)|^4 \le \kappa_6$.
	It then follows from \eqref{eq:momentbd1} that 
	\begin{equation*}
		\Ebf \left\|X_i^N - X_i\right\|_{*,t}^4 \le \kappa_7 \int_0^t \Ebf \|X_i^N - X_i\|_{*,s}^4 \, ds + \kappa_7.
	\end{equation*}
	The result then follows from Gronwall's inequality.
\end{proof}

\section{Conclusion}\label{sec:conclusion}
We have considered the JSQ($d$) policy in large-scale systems where the servers communicate with their neighbors and the neighborhood relationships are described in terms of a suitable graph.
We have developed sufficient criteria for arbitrary graph sequences so that asymptotically the evolution of the occupancy process on any finite time interval is indistinguishable from that for the case when the graph is a  clique.
We have also  considered  sequence of Erd\H{o}s-R\'enyi random graphs and established sufficient criteria in terms of the growth rates of the average degree that ensure the annealed and quenched  limit of the occupancy process on any finite time interval to coincide with that in the clique.

The long time behavior of the occupancy measure process associated with
 the above graph sequences is an important and challenging  open question.
Long time properties of the JSQ($d$) scheme have been well studied in the case of a clique.
For example, in~\cite{Mitzenmacher1996, Mitzenmacher01} it is shown that $\pi^N$, the stationary measure of the occupancy process of the $N$-th system, converges in distribution to $\delta_{\qq^*}$, where $\qq^*$ is the unique fixed point of the limiting deterministic dynamical system $\qq(\cdot)$.
Roughly speaking such a result says that the limits  $t\to\infty$ and $N\to\infty$ can be interchanged.
Based on Theorems \ref{th:deterministic}--\ref{thm:npn_rate_quench}, it is natural to conjecture that a similar interchangeability also holds for more general graphs considered in this work.
However, the setting here is significantly harder, in particular, the occupancy process is not any more a Markov process.
One may conjecture that with $\pi^N$ replaced by the time asymptotic limit of the law of occupancy process, the convergence $\pi^N \to \delta_{\qq^*}$ still holds. However,
currently even the existence of such a time asymptotic limit is not clear.

\section*{Acknowledgement}
Research of AB has been partially supported by the National Science Foundation (DMS-1305120), the Army Research Office (W911NF-14-1-0331) and DARPA (W911NF-15-2-0122).
DM was supported by The Netherlands Organization for Scientific Research (NWO) through Gravitation Networks grant 024.002.003, and TOP-GO grant 613.001.012.
The work was initiated during DM's visit to UNC, Chapel hill. 
DM sincerely thanks the hospitality of UNC, Chapel hill for that.
DM thanks Martin Zubeldia for several helpful comments.


\end{document}